\newcommand{\independent}{\Vbar}
\pgfplotsset{compat=newest}
\pgfplotsset{colormap/Paired-6}
\newsavebox{\largestimage}
\newcommand{\DeclareAlphabet}[2]{% l'usine à commandes
  \foreach \x in {A,B,...,Z}{%
    \expandafter\xdef
    \csname #1\x\endcsname{%
      \noexpand#2{\x}}%
  }
}
 \let\R\dR
 \let\N\dN
\newcommand{\cb}{\ensuremath{\mathscr{B}}}
\newcommand{\ce}{\ensuremath{\mathscr{E}}}
\newcommand{\cf}{\ensuremath{\mathscr{F}}}
\newcommand{\cg}{\ensuremath{\mathscr{G}}}
\newcommand{\ci}{\ensuremath{\mathscr{I}}}
\newcommand{\norm}[1]{\left\lVert\,#1\,\right\rVert}
\newcommand{\rd}{\mathrm{d}}
\newcommand{\un}{\mathbb{1}}
\newcommand{\zero}{\mathbb{0}}
 \renewcommand{\P}{\dP}
 \newcommand{\E}{\ensuremath{\mathbb{E}}}
 \newcommand{\esp}[1]{\mathbb{E}\left[#1\right]}
 \newcommand{\prb}[1]{\dP\left[#1\right]}
 \newcommand{\given}{\middle|}
 \newcommand{\ind}[1]{\mathbb{1}_{#1}}
 \let\F\cF
\newcommand{\cl}{\ensuremath{\mathscr{L}^\infty}}
\newcommand{\cpa}{\mathcal{P}^\mathrm{Anti}}
\newcommand{\AF}{\cF^\mathrm{Anti}}
 \newcommand{\FF}{\ensuremath{\mathbf{F}}}
\newcommand{\kk}{\mathrm{k}}
\newcommand{\kkk}{{\bm{\kk}}}
\newcommand{\I}{\mathfrak{I}}
\newcommand{\loss}{\mathrm{L}}
\newcommand{\costad}{\mathfrak{c}}
\newcommand{\costu}{C_\mathrm{uni}}
\newcommand{\param}{\mathrm{Param}}
 \newcommand{\Tinf}{\ensuremath{\mathcal{T}}}
\newcommand{\ca}{\ensuremath{\mathscr{A}}}
\newcommand{\traits}{\mathcal{X}}
\newcommand{\common}{\mathcal{C}}
\newcommand{\extended}{\bm{\pi}}
\begin{document}

\section{Introduction}

We consider the effect of vaccination in an heterogeneous SIS model
(with S=Susceptible and I=Infectious), in the framework introduced in
\cite{delmas_infinite-dimensional_2020}.  The model, which will be
recalled in detail below, is parametrized by four elements: a feature
space, denoted by $\traits$ ; two real-valued functions $\gamma$ and
$\costad$ on $\traits$, representing the feature-dependent recovery
rate and  vaccination cost; a real-valued
function $k$ on $\traits^2$, encoding the infection rate between
individuals of different features.  We focus on
%the question of
optimizing feature-dependent vaccination strategies, as discussed in
\cite{ddz-theory-optim}.
% the question of
% optimizing feature-dependent vaccination strategies, as discussed in
% \cite{ddz-theory-optim}.

In classical probability theory, the same random experiment
may be represented by two different probability spaces and
random variables, with the same distribution. 
Unsurprisingly, the same situation occurs here in the
choice of the trait space and the associated parameters.
The goal of this article is to
  describe precisely a notion of equivalence between models via a
  coupling, and to compare equilibria and  optimal vaccination strategies
   between equivalent models. 

  We address the three following questions, see Theorem~\ref{thr:main}
  and Corollary~\ref{cor:=frontier}:
  \begin{enumerate}
  \item Do equivalent models lead to comparable optimal vaccination strategies?
    Is knowing the optima for one model enough to find the optima in
    equivalent models?
  \item If the feature space is ``too rich'', and encodes features that
    are not relevant to the propagation of the epidemic, is it possible
    to reduce the model by ``forgetting''  irrelevant features?
  \item Do equivalent models evolve in the same way, and in particular
    can we compare their equilibria?
  \end{enumerate}

  In the next section, we introduce the necessary notation,
  borrowing heavily from the presentation of~\cite{ddz-hit}.
  The main result is stated in
  Section~\ref{sec:equivalent} and gives positive answers to the three
  questions; the proofs are postponed to Section~\ref{sec:proof-coupling}.  
 Detailed examples are discussed in Section~\ref{sec:exple-couple}.

\section{Framework and notation}
\label{sec:main}

\subsection{The heterogeneous SIS model}
\label{sec:SIS-heterogeneous}
We recall the differential equations governing the epidemic dynamics
in meta-population~SIS models introduced in~%
\cite{delmas_infinite-dimensional_2020}, to which we refer for
additional context and details. 

Let~$(\traits, \cf,  \mu)$ be a  probability space, where~$x  \in \traits$
represents  a feature  and the  probability measure~$\mu(\mathrm{d}  x)$
represents  the  fraction  of  the  population  with  feature~$x$.   The
parameters of the SIS model are given by a \emph{recovery rate function}
$\gamma$, which  is a  positive bounded  measurable function  defined on
$\traits$, and a \emph{transmission rate kernel} $k$, where a kernel is a
nonnegative measurable  function defined on $\traits^2$.
In accordance with \cite{delmas_infinite-dimensional_2020}, we consider for a kernel $\kk$
on $\traits$ and $q\in (1, +\infty )$ its   norm:
$%\begin{equation*}
  \norm{\kk}_{\infty ,q}=\sup_{x\in \traits}\, \left(\int_\traits \kk(x,y)^q\,
  \mu(\mathrm{d} y)\right)^{1/q}.
$ %\end{equation*}
 For a kernel $\kk$ on $\traits$ such that $\norm{\kk}_{\infty ,q}$ is finite for some $q\in
(1, +\infty )$, we define the integral operator~$\Tinf_{\kk}$  on the set $\cl$ of bounded
measurable real-valued function on $\traits$ by:
\begin{equation*}
  %\label{eq:def-Tkk}
  \Tinf_\kk (g) (x) = \int_\traits \kk(x,y) g(y)\,\mu(\mathrm{d}y)
  \quad \text{for } g\in \cl \text{ and } x\in \traits.
\end{equation*}

By convention, for~$f,g$ two nonnegative measurable functions defined on~$\traits$
and~$\kk$ a kernel on~$\traits$, we denote by $f\kk g$ the kernel on $\traits$ defined by:
\begin{equation}
  \label{eq:def-fkg}
  f\kk g:(x,y)\mapsto f(x)\, \kk(x,y) g(y).
\end{equation}
We shall consider the kernel $\kkk=k\gamma^{-1}$, 
which is thus  defined by:
\begin{equation*}
 % \label{eq:def-kk}
  \boxed{    \kkk(x,y)=k(x,y)\, \gamma(y)^{-1}.}
\end{equation*}
We  assume that:
\begin{equation}
  \label{eq:bded}
  { \norm{\kkk}_{\infty ,q}<\infty 
    \quad\text{for some $q\in (1, +\infty )$.}}
\end{equation}
The integral operator~$\Tinf_{\kkk}$  is   the  so  called  \emph{next-generation
operator}.

\medskip

Let $\Delta=\{f\in \cl\,\colon\, 0\leq  f\leq 1\}$ be the
subset of nonnegative functions bounded by~$1$, and let $\zero, \un\in \Delta$
be  the constant  functions  equal  respectively to 0 and to 1.   The  SIS dynamics  considered
in~\cite{delmas_infinite-dimensional_2020} follows  the vector field~$F$
defined on~$\Delta$ by:
\begin{equation*}
%  \label{eq:vec-field}
{  F(g) = (\un - g) \Tinf_k (g) - \gamma g.}
\end{equation*}
More precisely, we consider~$u=(u_t, t\in \R)$, where~$u_t\in \Delta$
for all~$t\in\R_+$, and  $u$ 
solves in $\cl$:
\begin{equation}
  \label{eq:SIS2}
{   \partial_t u_t = F(u_t)} \quad\text{for } t\in \R_+,
\end{equation}
with initial condition~$u_0\in \Delta$. The value~$u_t(x)=u(t,x)$ models
the  probability  that  an  individual of  feature~$x$  is  infected  at
time~$t$; it  is proved  in~\cite{delmas_infinite-dimensional_2020} that
such a solution~$u$ exists and is unique.

\medskip

An \emph{equilibrium}  of~\eqref{eq:SIS2} is  a function~$g  \in \Delta$
such        that~$F(g)         =        0$.          According        to
\cite{delmas_infinite-dimensional_2020},  there exists  a \emph{maximal}
equilibrium~$\mathfrak{g}$, that is, an  equilibrium such that all other
equilibria~$h\in     \Delta$     are    dominated     by~$\mathfrak{g}$:
$h \leq \mathfrak{g}$.  This maximal equilibrium is obtained as the
long time pointwise limit of  the  SIS  model started  with  its whole  population
infected:     
\( { \lim_{t\rightarrow \infty } u_t=\mathfrak{g}} \) where  
$u_0=\un$.    The   \emph{fraction    of   infected   individuals   at
  equilibrium},~$\I_0$, is thus given by:
\begin{equation*}
  %\label{eq:def-I0}
\boxed{\I_0=\int_\traits \mathfrak{g}\,  \rd \mu.}
\end{equation*}
For $T$  a bounded  operator on  $\cl$ endowed  with its  usual supremum
norm,  we denote  by~$\norm{T}_{\cl}$ its  operator norm.   The spectral
radius        of        $T$        is        then        given        by
$  \rho(T)= \lim_{n\rightarrow  \infty  } \norm{T^n}_{\cl}^{1/n}$.   The
\emph{reproduction  number}~$R_0$  associated  to the  SIS  model  given
by~\eqref{eq:SIS2}  is  the  spectral   radius  of  the  next-generation
operator:
\begin{equation}
 % \label{eq:def-R0-2}
 \boxed{   R_0= \rho (\Tinf_{\kkk}).}
\end{equation}
If~$R_0\leq 1$  (sub-critical and  critical case),  then~$u_t$ converges
pointwise  to~$\zero$  when~$t\to\infty$.  In  particular,  the  maximal
equilibrium~$\mathfrak{g}$   is   equal    to~$\zero$    and
$\I_0=0$.  If~$R_0>1$ (super-critical  case), then~$\zero$  is still  an
equilibrium but  different from the maximal  equilibrium $\mathfrak{g}$,
as~$\I_0=\int_\traits \mathfrak{g} \, \mathrm{d}\mu > 0$.

\subsection{Vaccination strategies}
\label{sec:vacc}

A \emph{vaccination strategy}~$\eta$ of a vaccine with perfect efficiency is an element
of~$\Delta$, where~$\eta(x)$ represents the proportion of \emph{\textbf{non-vaccinated}}
individuals with feature~$x$. Notice that~$\eta\, \mathrm{d} \mu$ corresponds in a sense
to the effective population.
In particular, the ``strategy'' that consists in vaccinating no one
corresponds to $\eta = \un$, the constant function equal to 1, while $\eta
= \zero$, the constant function equal to 0, corresponds to vaccinating everybody.

%\medskip

Recall the definition of the kernel~$f \kk g$ from~\eqref{eq:def-fkg}.
For~$\eta \in \Delta$, the kernel~$\kkk\eta=k\eta/\gamma$ has finite
norm $\norm{\cdot}_{\infty , q}$, so we can consider the bounded
positive operators~$\Tinf_{\kkk \eta }$ and~$\Tinf_{k\eta}$ on~$\cl$.
According to \cite[Section~5.3.]{delmas_infinite-dimensional_2020},
the SIS equation with vaccination strategy~$\eta$ is given by
$u^\eta=(u^\eta_t, t\geq 0)$ solution to~\eqref{eq:SIS2} with~$F$ is
replaced by~$F_\eta$ defined by:
\begin{equation*}
%  \label{eq:vec-field-vaccin}
{   F_\eta(g) = (\un - g) \Tinf_{k\eta}(g) - \gamma g.}
\end{equation*}
The quantity~$u_t^\eta(x)=u^\eta(t,x)$ then  represents  the  probability  for  a  non-vaccinated
individual  of feature~$x$  to be  infected at  time $t$;  so at time
$t$ among  the
population  of feature~$x$,  a  fraction $1-\eta(x)$  is  vaccinated,  a
fraction $\eta(x)\, u_t^\eta(x)$ is  not vaccinated and infected, and a
fraction $\eta(x)\, (1-u_t^\eta(x))$ is  not vaccinated and
not infected.

We define the \emph{effective reproduction number} $R_e(\eta)$
associated to the vaccination strategy $\eta$ as  the spectral radius
of the effective next-generation operator~$\Tinf_{\kkk \eta}$:
\begin{equation}\label{eq:def-R_e}
  \boxed{R_e(\eta)=\rho(\Tinf_{\kkk\eta}).}
\end{equation}
For example, for the trivial vaccination strategies we get~$R_e(\un) =
R_0$ and $R_e(\zero) = 0$. 
We also denote by~$\mathfrak{g}_\eta$ the corresponding maximal
equilibrium and, using that $\eta \, \rd \mu$ is the effective
population, we define   the \emph{effective fraction
  of infected individuals at equilibrium} as:
\begin{equation}
  \label{eq:def-I}
\boxed{\I(\eta)=\int_\traits \mathfrak{g_\eta}\, \eta\,  \rd \mu.}
\end{equation}
For example, we have  $\I(\un)=\I_0$ and $\I(\eta)=0$ for all $\eta\in
\Delta$ such that $R_e(\eta)\leq  1$. 
% We refer  to~\cite{ddz-theory-topo} for  the regularity  (continuity and
% monotony) of the loss function $\loss\in \{R_e, \I\}$. %

\subsection{Optimal strategies}

For a vaccination strategy $\eta\in \Delta$, we consider its loss
$\loss(\eta)$,  given either by the effective
reproduction number  ($\loss=R_e$) or by the effective fraction
of infected individuals at equilibrium ($\loss=\I$). 
Following~\cite{ddz-theory-optim}, we  measure the cost for  the society
of a vaccination strategy (production,  diffusion, ...) by a nonnegative
function~$C$ defined  on $\Delta$.  We shall  concentrate on  the affine
case:
\begin{equation*}
 % \label{eq:def-costaff}
\boxed{  C(\eta) = \int_\traits  (\un-\eta)\, \costad \, \rd \mu}
\end{equation*}  
where the nonnegative function $\costad \in L^1$ represents the
feature-dependent
cost of vaccinating individuals.   Notice that
doing nothing costs nothing, that is,~$C(\un)=0$.  A simple and
natural choice is the uniform cost~$\costu$ corresponding to
$\costad=\un$.

\medskip

Let us note  that if $H$ is any of the three functionals $R_e$, $\I$ or $C$,
and if $\eta_1= \eta_2$ $\mu$-a.s., then $H(\eta_1) = H(\eta_2)$. 
Following~\cite{ddz-theory-optim} we therefore consider
the set of vaccination strategies as a subset of $L^\infty $:
\begin{equation}
   \label{eq:def-D}
  \boxed { \Delta=\{\eta\in L^\infty \, \colon\, 0      \leq  \eta\leq
 1 \quad \mu-\text{a.s.}\}.}
\end{equation}

\medskip

In  \cite[Section~4]{ddz-theory-optim},  we  formalized  and  study  the
problem of  optimal allocation strategies  for a perfect vaccine  in the
SIS model.  This  question may be viewed as  a bi-objective minimization
problem,  where one  tries to  minimize simultaneously  the cost  of the
vaccination and  its corresponding loss:
\begin{equation*}
 % \label{eq:bi-min}
  {\min_{\Delta} (C,\loss).}
\end{equation*}

%; this latter framework is used in~\cite{ddz-theory-optim}. 
% \[
%  \boxed { \Delta=\{\eta\in L^\infty \, \colon\, 0      \leq  \eta\leq
% 1 \quad \mu-\text{a.s.}\}.}
% \]

We   call   a   strategy~$\eta_\star$
\emph{Pareto optimal} if no other strategy is strictly better:
\[
  C(\eta)< C(\eta_\star) \implies \loss(\eta) > \loss(\eta_\star)
  \quad\text{and}\quad
  \loss(\eta)< \loss(\eta_\star) \implies C(\eta) > C(\eta_\star).
\]
The set of Pareto optimal strategies will be denoted by~$\cP\subset \Delta$, and we define the
\emph{Pareto frontier} as the set of Pareto optimal outcomes:
\[
\boxed{\F =
  \{ (C(\eta_\star),\loss(\eta_\star)) \, \colon \, \eta_\star \in
  \cP \}.}
\]
We call a strategy
$\eta^\star$  \emph{anti-Pareto optimal} if no other strategy is strictly
worse, that is, $  C(\eta)> C(\eta_\star) \implies \loss(\eta) <
\loss(\eta_\star)$ and $
  \loss(\eta)> \loss(\eta_\star) \implies C(\eta) < C(\eta_\star)$.
The set of anti-Pareto optimal strategies will be denoted by~$\cpa\subset \Delta$, and we define the
\emph{anti-Pareto frontier} as the set of anti-Pareto optimal outcomes $\AF =
  \{ (C(\eta ^\star),\loss(\eta ^\star)) \, \colon \, \eta \star \in
  \cpa\}$.
  % Eventually, we have the set of outcomes:
  % \[
  %   \FF = \{(C(\eta), \loss(\eta)), \eta\in\Delta\}\subset [0, C(\zero)] \times
  %   [0, \loss(\un)].
  % \]
  We refer to~\cite{ddz-theory-optim} for an extensive study and alternate
  characterizations of % $\FF$ and
  the Pareto and anti-Pareto frontiers; let us simply mention that
  under our assumptions both frontiers are non-trivial.

\subsection{Parameters of the SIS model  in a nutshell}
\label{sec:hyp}
Let us summarize the setup. The SIS model is given by a probability space $(\traits,
\cf, \mu)$, a positive recovery rate function $\gamma\in \cl (\traits, \cf)$,
a transmission rate kernel $k$ (that is,  a measurable
nonnegative function defined on $\traits^2$) such that
$\norm{k/\gamma}_{\infty ,q}<\infty $ for some $q\in (1, +\infty )$,
see~\eqref{eq:bded}, and an affine cost function with a nonnegative  density
$\costad\in L^1(\traits, \cf, \mu)$. 
We denote the  parameters of the SIS model by:
\[
\param=[(\traits, \cf,  \mu), (k, \gamma), \costad\, ].
\]
Finally, we write $H[\param]$ to emphasize the dependence of any quantity
$H$ on the parameters: for example $R_e[\param](\eta)$ is the effective
reproduction number associated to the vaccination strategy $\eta$ in the model
defined by $\param$.

\section{Equivalence of models by coupling}\label{sec:equivalent}

We now define our main tool: the \emph{coupling} of two SIS models, which gives
rise to a notion of \emph{conjugation} between functions defined on the first
and the second model. This tool is then used to state our main results. All
proofs are postponed to Section~~\ref{sec:proof-coupling}.

\begin{remark}[Graphons and weak isometry]
  In Section~\ref{sec:exple-couple},
we present an example where discrete models can be represented as a
continuous models and an example based on measure preserving
transformation in the spirit of the graphon theory.  We refer the
reader to \cite{janson2010graphons} for similar developments in the
graphon setting.
\end{remark}

 \subsection{On measurability}
 \label{sec:measurable}
 
 Let us recall some well-known facts on measurability.  Let~$(E, \ce)$
 and~$(E', \ce')$ be two measurable spaces.  If~$E'=\R$, then we
 take~$\ce'=\mathcal{B}(\R)$ the Borel~$\sigma$-field.  Let~$f$ be a
 function from~$E$ to~$E'$.  We denote by
 $\sigma(f)=\{f^{-1}(A)\, \colon \, A\in \ce'\}$ the~$\sigma$-field
 generated by~$f$.  In particular the function~$f$ is measurable
 from~$(E, \ce)$ to $(E', \ce')$ if and only
 if~$\sigma(f)\subset \ce$.  Let~$\varphi$ be a measurable function
 from~$(E, \ce)$ to~$(E', \ce')$.  For~$\nu$ a measure on~$(E, \ce)$,
 we write~$\nu'=\varphi_\# \nu$ for the push-forward measure
 on~$(E', \ce')$ of the measure~$\nu$ by the function~$\varphi$; by
 definition of~$\nu'$, for a nonnegative measurable function~$g$
 defined from~$(E', \ce')$ to~$(\R, \mathcal{B}(\R))$, we have:
\begin{equation}\label{eq:push}
  \int_{E'} g \, \mathrm{d} \nu'= \int _{E} g\circ
  \varphi \, \mathrm{d} \nu.
\end{equation}
In particular, if $f,g$ are measurable functions defined
from~$(E', \ce')$ to some measurable space, then we have that
$\nu'$-a.e.\ $f=g$ if and only if $\nu$-a.e.\
$ g\circ \varphi = f\circ \varphi $.  Thus, if $g$ belongs to
$ L^p(E', \ce', \nu')$, then $g\circ \varphi$ is well defined as an
element of $L^p(E, \ce, \nu)$.

Let~$f$ be a measurable function from~$(E, \ce)$ to~$(\R,
\mathcal{B}(\R))$. We recall (see for example~\cite[Lemma 1.14]{Kal21}) that:
\begin{equation}
 \label{eq:def-f-phi0}
 \sigma(f)\subset \sigma(\varphi)\,
 \Longrightarrow\, f=g\circ \varphi,
\end{equation}
for some measurable function $g$ from~$(E', \ce')$ to~$(\R, \mathcal{B}(\R))$.

In what follows the random variables are defined on some probability
space $(\Omega_0, \cf_0, \P)$. 

\subsection{Coupling and conjugate functions}
Let $(E_1,\ce_1,\mu_1)$ and $(E_2,\ce_2,\mu_2)$ be measurable spaces.
A \emph{coupling} is a measure $\pi$ on $(E_1\times E_2, \ce_1 \otimes \ce_2)$ with
marginals $\mu_1$ and $\mu_2$. By abuse of notation we also call coupling
a random variable $Z=(Z_1,Z_2)$ with distribution~$\pi$, and also say
that $E_1$ and $E_2$ are coupled trough $Z$.
% Let $\common$ denote the sigma-field $\sigma(Z_1)\cap \sigma(Z_2)$. 

We introduce a notion of conjugacy whose basic properties are similar to
convex conjugation.

\begin{definition}[Conjugate functions]
  Let  $(E_1,\ce_1,\mu_1)$ and $(E_2,\ce_2,\mu_2)$ be coupled through $
  (Z_1,Z_2)$. 
  Let $f_i\in L^1(E_i,\mu_i)$ for $i=1, 2$. 
  The \emph{conjugate} $f_1^*$ of $f_1$ is the element of $L^1(E_2)$ defined by:
  \[
    f_1^*(Z_2) = \esp{ f_1(Z_1) \given \common}
    \quad\text{with}\quad
\common = \sigma(Z_1)\cap\sigma(Z_2); 
  \]
  its existence is justified by~\eqref{eq:def-f-phi0}. 
  Similarly $f^*_2\in L^1(E_1)$ is defined by $f_2^*(Z_1) =
  \esp{f_2(Z_2)\given \common}$.

  The pair $(f_1,f_2)$ is
  called \emph{conjugate} if   $f_1 = f_2^*$ and $f_2 = f_1^*$; it is
  called \emph{pre-conjugate} if the  pair $(f_2^{*},f_1^*)$ is
  conjugate (that is, $ f_2^*=f_1^{**}$ and  $f_1^*=f_2^{**}$). 
\end{definition}

Notice that a conjugate pair is  also pre-conjugate, but the converse is
false in general. 

We shall see below that if the transmission kernels,
recovery functions and the density of the cost functions of two SIS
model are conjugate, then any vaccinations strategies which are
pre-conjugate have the same loss and cost, and thus are (anti-)Pareto
optima simultaneously.

We first give another characterization of the conjugation.

\begin{lemma}[Characterization of conjugation]
  \label{lem:conj0}
 Let  $(E_1,\ce_1,\mu_1)$ and $(E_2,\ce_2,\mu_2)$ be coupled through $
 (Z_1,Z_2)$. 
 Let $f_i\in L^1(E_i)$ for $i=1, 2$. We have:
       \[
         (f_1,f_2) \quad\text{is conjugate}\quad
         \Longleftrightarrow \quad f_1(Z_1) = f_2(Z_2) \quad \pi\text{-a.s..}
       \]
       If  the  pair  $(f_1,f_2)$   is  conjugate,  then  $f_i(Z_i)$  is
       $\common$-measurable          for          $i=1,2$,          with
       $\common = \sigma(Z_1)\cap\sigma(Z_2)$.
\end{lemma}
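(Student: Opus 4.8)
The plan is to prove the equivalence in two implications, exploiting the elementary fact that a random variable is equal a.s.\ to its conditional expectation with respect to a sub-$\sigma$-field precisely when it is (a.s.\ equal to something) measurable with respect to that sub-$\sigma$-field.

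First I would prove the easy direction: assume $f_1(Z_1) = f_2(Z_2)$ $\pi$-a.s. Then $f_1(Z_1)$ is $\sigma(Z_1)$-measurable by construction and, being $\pi$-a.s.\ equal to the $\sigma(Z_2)$-measurable variable $f_2(Z_2)$, it is also $\sigma(Z_2)$-measurable up to a null set; hence $f_1(Z_1)$ is ($\pi$-a.s.\ equal to a) $\common$-measurable variable, where $\common = \sigma(Z_1)\cap\sigma(Z_2)$. (Strictly speaking I should note that $\sigma(Z_1)\cap\sigma(Z_2)$ is a $\sigma$-field and that ``a.s.\ equal to $\sigma(Z_1)$-measurable and a.s.\ equal to $\sigma(Z_2)$-measurable'' gives ``a.s.\ equal to $\common$-measurable'' — this is the one point needing a small argument, using that on the probability space $(\Omega_0,\cf_0,\P)$ a version of $f_1(Z_1)$ lies in $L^1$ and its conditional expectations on $\sigma(Z_1)$ and $\sigma(Z_2)$ coincide with it, so its conditional expectation on $\common$ also coincides with it.) Consequently $\esp{f_1(Z_1)\given \common} = f_1(Z_1) = f_2(Z_2)$, and by the definition of the conjugate, $f_1^*(Z_2) = \esp{f_1(Z_1)\given\common} = f_2(Z_2)$, so $f_1^* = f_2$ $\mu_2$-a.s.\ (using~\eqref{eq:push}); symmetrically $f_2^* = f_1$, so $(f_1,f_2)$ is conjugate. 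This argument also establishes the final sentence of the lemma, since it shows $f_i(Z_i)$ is $\common$-measurable.

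Next I would prove the converse: assume $(f_1,f_2)$ is conjugate, i.e.\ $f_1 = f_2^*$ and $f_2 = f_1^*$. By definition $f_1(Z_1) = f_2^*(Z_1) = \esp{f_2(Z_2)\given\common}$ and $f_2(Z_2) = f_1^*(Z_2) = \esp{f_1(Z_1)\given\common}$. The key observation is that $f_1(Z_1)$ is $\sigma(Z_1)$-measurable, so $\esp{f_1(Z_1)\given\common}$ is the $\common$-conditional expectation of a $\sigma(Z_1)$-measurable variable; I want to deduce $f_1(Z_1) = \esp{f_1(Z_1)\given\common}$ $\pi$-a.s. Substituting the two identities into each other: $f_1(Z_1) = \esp{f_2(Z_2)\given\common} = \esp{\esp{f_1(Z_1)\given\common}\given\common} = \esp{f_1(Z_1)\given\common}$ by the tower property, since $\common\subset\common$. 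Hence $f_1(Z_1) = \esp{f_1(Z_1)\given\common}$, which is $\common$-measurable; and likewise $f_2(Z_2) = \esp{f_1(Z_1)\given\common}$. Therefore $f_1(Z_1) = \esp{f_1(Z_1)\given\common} = f_2(Z_2)$ $\pi$-a.s., which is the desired conclusion.

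The main obstacle — really the only non-routine point — is the measure-theoretic lemma invoked in the first direction: that a random variable which is $\P$-a.s.\ equal both to a $\sigma(Z_1)$-measurable function and to a $\sigma(Z_2)$-measurable function is $\P$-a.s.\ equal to a $\sigma(Z_1)\cap\sigma(Z_2)$-measurable function. I would handle this via conditional expectations on the probability space $(\Omega_0,\cf_0,\P)$: if $W := f_1(Z_1)\in L^1(\P)$ satisfies $W = \esp{W\given\sigma(Z_1)}$ and $W = \esp{W\given\sigma(Z_2)}$ $\P$-a.s., then conditioning the first identity further on $\common\subset\sigma(Z_1)$ gives $\esp{W\given\common} = \esp{W\given\sigma(Z_1)}$ restricted appropriately — more precisely one shows $\esp{W\given\common}=\esp{\esp{W\given\sigma(Z_2)}\given\common}$; since $W$ is $\sigma(Z_1)$-measurable this equals $\esp{W\given\common}$, and comparing with $W=\esp{W\given\sigma(Z_1)}$ and the symmetric relation forces $W=\esp{W\given\common}$. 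Everything else is bookkeeping with~\eqref{eq:push} and~\eqref{eq:def-f-phi0} to pass between functions on $E_i$ and their compositions with $Z_i$, and between $\mu_i$-a.s.\ and $\pi$-a.s.\ statements.
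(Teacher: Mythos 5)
Your forward direction (conjugate $\Rightarrow$ a.s.\ equality, together with the $\common$-measurability claim) is correct and is exactly the paper's argument: from $f_1(Z_1)=\esp{f_2(Z_2)\given\common}$ and $f_2(Z_2)=\esp{f_1(Z_1)\given\common}$ both variables are $\common$-measurable, hence equal a.s.; the paper merely cites this elementary fact where you spell it out via the tower property.

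The genuine gap is in the backward direction, precisely at the point you yourself flag as ``the only non-routine point''. The implication you need --- if $W\in L^1(\P)$ satisfies $W=\esp{W\given\sigma(Z_1)}$ and $W=\esp{W\given\sigma(Z_2)}$ a.s., then $W=\esp{W\given\common}$ a.s.\ with $\common=\sigma(Z_1)\cap\sigma(Z_2)$ --- is not established by your manipulation: the chain $\esp{W\given\common}=\esp{\esp{W\given\sigma(Z_2)}\given\common}=\esp{W\given\common}$ is the tower property applied and then undone, a tautology from which nothing ``forces'' $W=\esp{W\given\common}$. Worse, for the bare (uncompleted) intersection the implication is false: take $\Omega_0=\{1,2,3\}$ with $\P(\{1\})=0$, $\P(\{2\})=\P(\{3\})=1/2$, $Z_1=\ind{\{3\}}$, $Z_2=\ind{\{2\}}$ (a coupling of two Bernoulli$(1/2)$ marginals), and $f_1(z)=1-z$, $f_2(z)=z$ on $E_1=E_2=\{0,1\}$. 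Then $f_1(Z_1)=\ind{\{1,2\}}$ and $f_2(Z_2)=\ind{\{2\}}$ coincide $\P$-a.s., each is measurable with respect to its own $\sigma(Z_i)$, yet $\sigma(Z_1)\cap\sigma(Z_2)=\{\emptyset,\Omega_0\}$, so $f_1^*\equiv 1/2\neq f_2$ and the pair is not conjugate for this representation of the coupling. So the step cannot be obtained by conditional-expectation algebra alone; it requires treating $\common$ up to $\P$-null sets (e.g.\ intersecting the $\P$-completions of $\sigma(Z_1)$ and $\sigma(Z_2)$, or discarding superfluous null points of $\Omega_0$), which is the convention the paper implicitly relies on --- note its own one-line proof only addresses the forward implication and the measurability statement. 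As written, your proof of the converse implication does not go through.
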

\begin{proof}%[Proof of Lemma~\ref{lem:conj0}]
The proof is immediate as,  for  $X$ and $Y$ integrable random
  variables and a sub-$\sigma$-field $\common$,  the equalities $\E[X|\common]=Y$ and $\E[Y|\common]=X$ imply that  a.s.\ $X=Y$. 
\end{proof}

We shall complete the next result with other properties in
Section~\ref{sec:proof-coupling}.

\begin{lemma}[Properties of conjugation]
  \label{lem:conj1}
 Let  $(E_1,\ce_1,\mu_1)$ and $(E_2,\ce_2,\mu_2)$ be coupled through $Z =
 (Z_1,Z_2)$. Let $f\in L^1(E_1)$.

  \begin{enumerate}[(i)]
     % \item
     %   \label{it:conjugacy} Let $f_i\in L^1(E_i,\mu_i)$ for $i=1, 2$. We have:
     %   \[
     %     (f_1,f_2) \quad\text{is conjugate}\quad
     %     \Longleftrightarrow \quad f_1(Z_1) = f_2(Z_2) \quad \pi\text{-a.s..}
     %   \]
     %   In particular, if the pair $(f_1,f_2)$  is conjugate, then  $f_i(Z_i)$ is
     %   $\common$-measurable for $i=1,2$.
    \item \label{it:ff*}
       The pair $(f,f^*)$ is  pre-conjugate and 
      the pair  $(f^{**}, f^*)$ is  conjugate.
      %; and  $f^{***} =      f^*$. 

    \item  \label{it:ff**}
      Set $\common = \sigma(Z_1)\cap\sigma(Z_2)$. We have:
    \[
f(Z_1)  \text{ is $\common$-measurable} 
\quad\Longleftrightarrow \quad
f=f^{**}
 \quad\Longleftrightarrow \quad
  (f,f^*)\text{ is conjugate.}
\]
\end{enumerate}
\end{lemma}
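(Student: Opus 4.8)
The plan is to isolate one identity and to deduce both assertions from it together with Lemma~\ref{lem:conj0}. The identity is that, for every $f\in L^1(E_1)$,
\[
  f^{**}(Z_1)=f^*(Z_2)\qquad \pi\text{-a.s.}
\]
To establish it, recall that by definition $f^*(Z_2)=\esp{f(Z_1)\given\common}$ is $\common$-measurable, so conditioning it once more on $\common$ leaves it unchanged; since $f^{**}(Z_1)$ is by definition $\esp{f^*(Z_2)\given\common}$, the identity follows from idempotency of the conditional expectation. The only thing to watch here is the bookkeeping, since $f$ and $f^{**}$ are functions on $E_1$ while $f^*$ is a function on $E_2$: one must keep track of which of $Z_1$, $Z_2$ each conjugate is evaluated at. (That all these conjugates stay in the appropriate $L^1$ spaces is already granted by the definition, via the $L^1$-contraction property of conditional expectations.)

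Granting the identity, part~\ref{it:ff*} follows at once. By Lemma~\ref{lem:conj0}, the a.s.\ equality $f^{**}(Z_1)=f^*(Z_2)$ says precisely that $(f^{**},f^*)$ is conjugate. Moreover, for the pair $(f_1,f_2)=(f,f^*)$ one has $f_2^*=(f^*)^*=f^{**}$ and $f_1^*=f^*$, so the pair $(f_2^*,f_1^*)$ appearing in the definition of pre-conjugacy is exactly $(f^{**},f^*)$; hence $(f,f^*)$ is pre-conjugate.

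For part~\ref{it:ff**} I would argue by a cycle of implications. If $f(Z_1)$ is $\common$-measurable, then $f^*(Z_2)=\esp{f(Z_1)\given\common}=f(Z_1)$ $\pi$-a.s., so $(f,f^*)$ is conjugate by Lemma~\ref{lem:conj0}. If $(f,f^*)$ is conjugate, then $f=(f^*)^*=f^{**}$ straight from the definition of a conjugate pair. Finally, if $f=f^{**}$, then the identity above yields $f(Z_1)=f^{**}(Z_1)=f^*(Z_2)$ $\pi$-a.s., and the right-hand side is $\common$-measurable as a conditional expectation with respect to $\common$; hence $f(Z_1)$ admits a $\common$-measurable version, which closes the loop.

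I do not expect a real obstacle: once the idempotency identity is written down, the rest is a direct application of Lemma~\ref{lem:conj0} and the tower property. The one point needing a little care is that, for an element of $L^1$, the statement ``$f(Z_1)$ is $\common$-measurable'' should be read as ``$f(Z_1)$ has a $\common$-measurable version'', in line with how the conclusion of Lemma~\ref{lem:conj0} is phrased; with this reading the last implication of the cycle is unambiguous.
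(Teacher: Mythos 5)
Your proof is correct and follows essentially the same route as the paper's: both rest on the identity $f^{**}(Z_1)=f^*(Z_2)$ obtained from idempotency of conditioning on $\common$, and both then invoke Lemma~\ref{lem:conj0} for part~\ref{it:ff*} and close part~\ref{it:ff**} by the same three implications (merely arranged as a cycle rather than two separate equivalences). No gaps.
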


\begin{proof}%[Proof of Lemma~\ref{lem:conj1}]
    By definition, we have
  \(
  f^{**}(Z_1) = \esp{ f^*(Z_2) \given \common} = \esp{ \esp{f(Z_1) \given\common} \given \common},\)
  which yields that $ f^{**}(Z_1)=f^*(Z_2)$. 
  % \begin{equation}
  %   \label{eq:f3*}
  %   f^{**}(Z_1)=f^*(Z_2).
  % \end{equation}
By Lemma~\ref{lem:conj0} this implies that $(f^{**}, f^*)$ is conjugate
and thus that $(f, f^*)$ is pre-conjugate. This gives~\ref{it:ff*}. 

We now prove~\ref{it:ff**}.
Notice first that $f^{**}=f$ is equivalent to the pair  $(f, f ^*)$ being conjugate. 
Secondly, if $(f, f ^*)$ is conjugate, then by Lemma~\ref{lem:conj0}, we
get  that 
$f(Z_1)$ is
$\common$-measurable. Conversely,  if $f(Z_1)$ is
$\common$-measurable, we deduce  that $f^*(Z_2)=f(Z_1)$ and thus
$f^{**}=f$. 
\end{proof}

Let two  spaces $E_1$ and  $E_2$ be  coupled through $\pi$.  The product
spaces $\bE_1 = E_1\times E_1$ and $\bE_2 = E_2\times E_2$ may always be
coupled          through           the          random          variable
$(\bZ_1,\bZ_2)   =  ((X_1,Y_1),(X_2,Y_2))$,   where   the  two   vectors
$(X_1,X_2)$ and $(Y_1,Y_2)$ are  independent and follow the distribution
$\pi$. We denote the distribution  of $(\bZ_1,\bZ_2)$ by $\extended$ and
call  it  the  \emph{extended  coupling}. Conjugates  are  preserved  by
extension in the following sense; the proof is given in
Section~\ref{sec:proof-extended}. 
\begin{lemma}[Extended coupling and conjugacy]
  \label{lem:extended}
  If the measurable function $g:\bE_1\to \mathbb{R}$ only
  depends on its first argument, $g(x_1,y_1)= f(x_1)$, then $g^*(X_2,Y_2) = f^*(X_2)$
  (where $g^*$ is the conjugate through $\extended$ and $f^*$ the conjugate through $\pi$).
\end{lemma}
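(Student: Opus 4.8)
The plan is to reduce the statement to a single identity between conditional expectations on $(\Omega_0,\cf_0,\P)$, and then to settle that identity by exploiting the independence of the two copies of $\pi$ built into the extended coupling, together with Lemma~\ref{lem:conj0}. Concretely: since $g(\bZ_1)=f(X_1)$, the definition of the conjugate through $\extended$ reads $g^*(\bZ_2)=\esp{f(X_1)\given\common}$ with $\common=\sigma(\bZ_1)\cap\sigma(\bZ_2)=\sigma(X_1,Y_1)\cap\sigma(X_2,Y_2)$, while the definition of the conjugate through $\pi$, applied to the pair $(X_1,X_2)\sim\pi$, reads $f^*(X_2)=\esp{f(X_1)\given\common'}$ with $\common'=\sigma(X_1)\cap\sigma(X_2)$. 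Since $\common'\subset\sigma(X_1)\subset\sigma(X_1,Y_1)$ and $\common'\subset\sigma(X_2)\subset\sigma(X_2,Y_2)$, we get $\common'\subset\common$, so $f^*(X_2)$ is already $\common$-measurable; it therefore suffices to check that $\esp{f(X_1)\,W}=\esp{f^*(X_2)\,W}$ for every bounded $\common$-measurable random variable $W$.

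For the main step I would fix such a $W$ and set $V=\esp{W\given\sigma(X_1,X_2)}$. By~\eqref{eq:def-f-phi0}, write $W=\phi(X_1,Y_1)=\psi(X_2,Y_2)$ a.s.\ for measurable real-valued $\phi,\psi$. Because $(X_1,X_2)$ is independent of $(Y_1,Y_2)$, freezing $Y_1$ gives $V=\alpha(X_1)$ with $\alpha(x)=\int\phi(x,y)\,\mu_1(\rd y)$, and freezing $Y_2$ gives $V=\beta(X_2)$ with $\beta(x)=\int\psi(x,y)\,\mu_2(\rd y)$. Hence $\alpha(X_1)=\beta(X_2)$ a.s., and Lemma~\ref{lem:conj0} (applied to the coupling $(X_1,X_2)$) shows that the pair $(\alpha,\beta)$ is conjugate through $\pi$ and, crucially, that $V$ admits a $\common'$-measurable version.

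To finish, observe that $f(X_1)$ and $f^*(X_2)$ are both $\sigma(X_1,X_2)$-measurable (the second because $\common'\subset\sigma(X_1,X_2)$), so conditioning on $\sigma(X_1,X_2)$ and using $\esp{W\given\sigma(X_1,X_2)}=V$ yields $\esp{f(X_1)\,W}=\esp{f(X_1)\,V}$ and $\esp{f^*(X_2)\,W}=\esp{f^*(X_2)\,V}$. Since $V$ is bounded and $\common'$-measurable and $f^*(X_2)=\esp{f(X_1)\given\common'}$, the tower property gives $\esp{f^*(X_2)\,V}=\esp{f(X_1)\,V}$. Chaining the three identities gives $\esp{f(X_1)\,W}=\esp{f^*(X_2)\,W}$, hence $g^*(\bZ_2)=f^*(X_2)$, which is the claim.

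The real obstacle is the second step. A priori the common $\sigma$-field $\common$ of the extended coupling may be strictly larger than $\common'\vee\bigl(\sigma(Y_1)\cap\sigma(Y_2)\bigr)$, so one cannot simply argue that a $\common$-measurable $W$ ``depends only on the common parts'' of the two coordinates. What makes the argument work is the independence of the two copies of $\pi$: averaging $W$ over $\sigma(X_1,X_2)$ collapses it onto $\common'$, and Lemma~\ref{lem:conj0} is precisely what turns the almost-sure identity $\alpha(X_1)=\beta(X_2)$ into genuine $\common'$-measurability of that average.
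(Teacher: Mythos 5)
Your proof is correct. At bottom it rests on the same key computation as the paper's, but packaged differently: the paper first establishes the conditional independence $\sigma(X_1,X_2)\independent_{\common_X}\common$ (Lemma~\ref{lem:prob_key}~(i), with $\common_X=\sigma(X_1)\cap\sigma(X_2)$ equal to your $\common'$) and then reads off the claim in one line from the characterization~\eqref{eq:kall}, since $f(X_1)$ is $\sigma(X_1,X_2)$-measurable. Your ``main step'' --- for bounded $\common$-measurable $W$, the average $V=\esp{W\given \sigma(X_1,X_2)}$ is simultaneously of the form $\alpha(X_1)$ and $\beta(X_2)$ because the two copies of $\pi$ are independent, hence is $\common'$-measurable by Lemma~\ref{lem:conj0} --- is precisely the computation in the paper's proof of Lemma~\ref{lem:prob_key}~(i), where one conditions $\phi(X_1,Y_1)$ on $\sigma(X_1,X_2)$ using $\sigma(X_1,X_2)\independent_{\sigma(X_1)}\sigma(X_1,Y_1)$ instead of your freezing lemma (the same fact). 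Where you genuinely diverge is in converting this measurability into the conclusion: the paper invokes the symmetric formulation of conditional independence (Kallenberg's theorem behind~\eqref{eq:kall}), whereas you verify the defining identity $\esp{f(X_1)\,W}=\esp{f^*(X_2)\,W}$ directly by the tower-property chain through $V$. This makes your argument self-contained and somewhat more elementary, at the cost of not isolating the conditional independence as a reusable statement, which the paper needs again in the proof of Lemma~\ref{lem:conj_kernels}. The only points worth tightening are routine: $\phi$ and $\psi$ should be truncated at $\norm{W}_\infty$ before freezing so that $\alpha$ and $\beta$ are bounded (hence integrable), and ``$\common'$-measurable'' should be read as ``admits a $\common'$-measurable version,'' which is all the conditional-expectation argument requires.
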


\subsection{Coupled models}

We consider the SIS models
$\param_i=[(\traits_i, \cf_i, \mu_i), (k_i, \gamma_i), \costad_{i}]$
for $i=1,2$.  In what follows, we simply write $\Delta_i$ for
the set of functions $\Delta$, see~\eqref{eq:def-D}, in the model
$\param_i$.

\begin{theorem}[Coupling, equilibria and optimal vaccinations]
  \label{thr:main}
  %%% DEFINITION : COUPLE MODEL
  Consider two SIS models  $\param_1$ and $\param_2$, with
  a coupling
 between
  $(\traits_1, \cf_1, \mu_1)$ and $(\traits_2, \cf_2, \mu_2)$.
 Let $\eta_i\in \Delta_i$ be a vaccination strategies for the SIS model
$i=1,2$. 

\begin{enumerate}[(i)]
    
\item\label{it:th-Re}
  If the pair $(k_1/\gamma_1,k_2/\gamma_2)$ is conjugate (for the
  extended coupling), then 
  \[
    (\eta_1, \eta_2)
    \quad\text{is pre-conjugate}\quad
     \Longrightarrow\quad 
     R_e[\param_1](\eta_1) = R_e[\param_2](\eta_2).
   \]
   
    % If the pair $(\eta_1, \eta_2)$ is pre-conjugate,    then we have
    % $R_e[\param_1](\eta_1) = R_e[\param_2](\eta_2)$.    

  \item\label{it:th-I}
   If  both pairs $(k_1,k_2)$ and $(\gamma_1,\gamma_2)$ are
   conjugate,
then the equilibria are (pre-)conjugate: if $g_1$ is an equilibrium of
    $\param_1$, then there exists an  equilibrium $g_2$ of $\param_2$
    such that the pair $(g_1, g_2)$ is conjugate.  
    We also have:
  \[
    (\eta_1, \eta_2)
    \quad\text{is pre-conjugate}\quad
     \Longrightarrow\quad 
    \I[\param_1](\eta_1) = \I [\param_2](\eta_2).
   \]

    % If the pair $(\eta_1, \eta_2)$ is pre-conjugate,
    % then we have $\I[\param_1](\eta_1) = \I[\param_2](\eta_2)$.    

 \item \label{it:main}
     Suppose the assumptions of item~\ref{it:th-Re}, for $\loss=R_e$, or of
  item~\ref{it:th-I}, for $\loss=\I$,  hold.  Assume also that
the pair  $(\costad_1,\costad_2)$ is conjugate.   If  the pair
$(\eta_1, \eta_2)$ is pre-conjugate, then:: 
     \begin{equation}
   \label{eq:couple-PO}
  \text{$\eta_1$ is (anti-)Pareto optimal for
    $\param_1$}
  \, \Longleftrightarrow\,
  \text{$\eta_2$ is (anti-)Pareto optimal for
      $\param_2$}.
\end{equation}
For $\eta\in \Delta_1$, we have $\eta^*\in \Delta_2$ and
$H[\param_1](\eta) = H[\param_2](\eta^*)$ for $H$ equal to the loss
$\loss$ or the cost $C$; in particular, if $\eta$ is (anti-)Pareto
optimal for $\param_1$, then its conjugate $\eta^*$ is (anti-)Pareto
optimal for $\param_2$.
\end{enumerate}
\end{theorem}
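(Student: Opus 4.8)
The plan is to reduce everything to two facts: (a) conjugate pairs are ``the same function'' when pulled back to the coupling space, so integral operators and their spectra agree; and (b) pre-conjugate strategies $\eta_i$ become honestly conjugate after one application of the conjugation operator, and the quantities $R_e$, $\I$, $C$ only see $\eta_i$ through such operator-type expressions. So the first step is a lemma isolating the mechanism: if $(a_1,a_2)$ and $(b_1,b_2)$ are conjugate pairs (for the extended coupling, in the case of kernels), then for the associated integral operators $\Tinf_{a_1 b_1}$ on $\cl(\traits_1)$ and $\Tinf_{a_2 b_2}$ on $\cl(\traits_2)$ there is a norm-preserving correspondence of the relevant invariant structure — concretely, $g \mapsto g^*$ intertwines $\Tinf_{a_1 b_1}$ with $\Tinf_{a_2 b_2}$ when restricted to $\common$-measurable functions, and the spectral radius computed on $\cl$ equals the spectral radius computed on the $\common$-measurable subspace because the maximal equilibrium (resp.\ the Collatz--Wielandt characterization of $R_0$) lives there. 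This is where Lemma~\ref{lem:conj0}, Lemma~\ref{lem:conj1}\ref{it:ff**}, and Lemma~\ref{lem:extended} do the heavy lifting, turning ``conjugate kernel'' into the pointwise identity $(k_1/\gamma_1)(Z_1,Z_1') = (k_2/\gamma_2)(Z_2,Z_2')$ $\extended$-a.s.

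For \ref{it:th-Re}: first replace $\eta_1$ by $\eta_1^{**}$, which is legitimate because $(\eta_1^{**},\eta_1^*)$ is conjugate by Lemma~\ref{lem:conj1}\ref{it:ff*}, and because $R_e$ only depends on $\eta_1$ through $\mu_1$-a.s.\ equivalence classes — but pre-conjugacy of $(\eta_1,\eta_2)$ means precisely $\eta_1^* = \eta_2^{**}$ etc., and I will need to check that $\eta_1$ and $\eta_1^{**}$ give the same $R_e$; that requires showing $\Tinf_{(k_1/\gamma_1)\eta_1}$ and $\Tinf_{(k_1/\gamma_1)\eta_1^{**}}$ have the same spectral radius, which follows because the spectral radius of these positive operators is governed by their action on $\common$-measurable functions (the eigen-structure is $\common$-measurable), on which $\eta_1$ and $\eta_1^{**}$ agree $\mu_1$-a.s. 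Then, with both strategies and the kernel $k_1/\gamma_1$ in ``conjugate form'', pull back to $(\Omega_0,\cf_0,\P)$ via $Z$ and the extended coupling: the operator $\Tinf_{(k_1/\gamma_1)\eta_1}$ acting on $\common$-measurable functions becomes, after conjugation, literally $\Tinf_{(k_2/\gamma_2)\eta_2}$ acting on $\common$-measurable functions, because all three ingredients coincide a.s.\ when pulled back. Hence equal spectral radii, i.e.\ $R_e[\param_1](\eta_1)=R_e[\param_2](\eta_2)$.

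For \ref{it:th-I}: the new ingredient is the equilibrium. Start from $\mathfrak{g}_{\eta_1}$, the maximal equilibrium for $\param_1$ obtained as $\lim_t u_t$ with $u_0=\un$. The key claim is that this limit is $\common$-measurable: at each finite time $u_t$ is built from $\un$ by applying $F_{\eta_1}$, whose ingredients — $k_1$, $\gamma_1$, $\eta_1$ — are (after the $\eta_1\rightsquigarrow\eta_1^{**}$ replacement and using that $(k_1,k_2)$, $(\gamma_1,\gamma_2)$ are conjugate) all $\common$-measurable when pulled back, so by induction and passage to the limit $\mathfrak{g}_{\eta_1}(Z_1)$ is $\common$-measurable; define $g_2$ by $g_2(Z_2)=\mathfrak{g}_{\eta_1}(Z_1)$, which is well defined by \eqref{eq:def-f-phi0}, and check it solves $F_{\eta_2,2}(g_2)=0$ by transporting the fixed-point equation across the coupling, and that it is maximal (transport the monotone limit characterization, or the domination property). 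Then $\I[\param_1](\eta_1)=\int \mathfrak{g}_{\eta_1}\eta_1\,\rd\mu_1 = \E[\mathfrak{g}_{\eta_1}(Z_1)\eta_1(Z_1)] = \E[\mathfrak{g}_{\eta_2}(Z_2)\eta_2(Z_2)] = \I[\param_2](\eta_2)$, using Lemma~\ref{lem:conj0} for the product of two $\common$-measurable (hence conjugate) pairs — one should note $\eta_1(Z_1)\mathfrak{g}_{\eta_1}(Z_1) = \eta_2(Z_2)\mathfrak{g}_{\eta_2}(Z_2)$ $\pi$-a.s.

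Finally, item~\ref{it:main}: conjugacy of $(\costad_1,\costad_2)$ plus Lemma~\ref{lem:conj0} gives $C[\param_1](\eta_1) = \int(\un-\eta_1)\costad_1\,\rd\mu_1 = \E[(1-\eta_1(Z_1))\costad_1(Z_1)] = \E[(1-\eta_2(Z_2))\costad_2(Z_2)] = C[\param_2](\eta_2)$, again after the $\eta\rightsquigarrow\eta^{**}$ reduction so that $\eta_1(Z_1)=\eta_2(Z_2)$ $\pi$-a.s.; combined with the loss equality from \ref{it:th-Re} or \ref{it:th-I}, the bijection $\eta\mapsto\eta^*$ between $\Delta_1$ and $\Delta_2$ (which needs: $\eta\in\Delta_1\Rightarrow\eta^*\in\Delta_2$, immediate since conditional expectation preserves $[0,1]$; and $(\eta,\eta^*)$ pre-conjugate, by Lemma~\ref{lem:conj1}\ref{it:ff*}) preserves both coordinates of the outcome $(C,\loss)$, so it maps $\cP_1$ onto $\cP_2$ and $\cpa_1$ onto $\cpa_2$, which is exactly \eqref{eq:couple-PO}. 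I expect the main obstacle to be the spectral-radius/equilibrium transfer: rigorously justifying that $R_e$ and $\mathfrak{g}_\eta$ are insensitive to replacing $\eta$ by $\eta^{**}$, and that the relevant spectral and fixed-point data are $\common$-measurable so that they transport cleanly across the coupling — this needs care with the inductive construction of $u_t$, uniform convergence in $\cl$, and a Collatz--Wielandt or positive-operator argument to localize the spectral radius on the $\common$-measurable subspace.
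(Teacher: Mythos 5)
Your overall strategy (transfer everything through the coupling space and argue that the relevant spectral and fixed-point data have $\common$-measurable pull-backs) is the same as the paper's, and several of your reductions could be made to work. But there is a genuine gap at the very step you lean on everywhere: the claim that, once $k_1/\gamma_1$, $\gamma_1$, and (after the $\eta\rightsquigarrow\eta^{**}$ replacement) $\eta_1$ have a.s.\ the same pull-backs as their counterparts, the operator $\Tinf_{(k_1/\gamma_1)\eta_1}$ ``becomes literally'' $\Tinf_{(k_2/\gamma_2)\eta_2}$ on $\common$-measurable functions. Written probabilistically, $T_{\kk_1\eta_1}(v)(X_1)=\esp{\kk_1(X_1,Y_1)\eta_1(Y_1)v(Y_1)\given X_1}$ while the model-$2$ operator conditions on $X_2$; a.s.\ equality of the integrands does \emph{not} imply equality of conditional expectations with respect to the two different $\sigma$-fields $\sigma(X_1)$ and $\sigma(X_2)$. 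Bridging this is exactly the content of the paper's Lemma~\ref{lem:prob_key} (the conditional independences $\sigma(X_1,X_2)\independent_{\common_X}\common$, etc., for the extended coupling, and identity~\eqref{eq:prob_key}) and of Lemma~\ref{lem:conj_kernels} (the identities $T_{\kk}(v)=T_{\kk}(v)^{**}=T_{\kk}(v^{**})$ and $T_{\kk}(v)^*=T_{\kk^*}(v^*)$ in~\eqref{eq:TT*}). The lemmas you cite (\ref{lem:conj0}, \ref{lem:conj1}, \ref{lem:extended}) do not give this; and without it, your supporting claims --- that the eigen-structure is $\common$-measurable, that $R_e(\eta_1)=R_e(\eta_1^{**})$, that the flow $u_t$ stays in the $\common$-measurable subspace --- are all unsupported. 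Note also that your phrase ``on which $\eta_1$ and $\eta_1^{**}$ agree $\mu_1$-a.s.'' is wrong as stated: $\eta_1$ and $\eta_1^{**}$ are genuinely different functions in general; what is true (and needs the key identity plus Lemma~\ref{lem:conj}~\ref{it:biconj}) is that $T_{\kk\eta_1}$ and $T_{\kk\eta_1^{**}}$ coincide on the subspace of functions with $\common$-measurable pull-back, into which the range of $T_{\kk}$ falls. The paper's own argument for item~\ref{it:th-Re} actually avoids your $\eta\rightsquigarrow\eta^{**}$ reduction entirely: it shows directly that any eigenvector $v$ for a non-zero eigenvalue satisfies $v=v^{**}$ and that $v^*$ is an eigenvector of $T_{\kk^*\eta_2}$ for the same eigenvalue.

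Two secondary points. For the maximality of the transported equilibrium in item~\ref{it:th-I}, the paper does not transport the monotone construction of $\mathfrak{g}_{\eta}$; it shows that \emph{every} equilibrium $g$ satisfies $g=g^{**}$ via its fixed-point formula $g=\Tinf_{k_1}(\eta g)/(\gamma_1+\Tinf_{k_1}(\eta g))$, transports it, and then identifies the maximal one using the characterization $R_e(\un-\mathfrak{g})=1$ in the supercritical case (Proposition~5.5 of the cited companion paper). Your alternative --- transporting the limit $u_t\to\mathfrak{g}_\eta$ --- would additionally require proving that the two flows are conjugate for all times, a dynamic statement not contained in the static lemmas and which again rests on the missing operator-transfer identity. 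Finally, your last step (the equivalence~\eqref{eq:couple-PO} and the map $\eta\mapsto\eta^*$) is fine once cost and loss are matched, since $(\eta,\eta^*)$ is pre-conjugate by Lemma~\ref{lem:conj1}~\ref{it:ff*} and conjugation maps $\Delta_1$ into $\Delta_2$; this part agrees with the paper.
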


  As a
direct consequence, we get the following result, where  the set of
outcomes is defined as $\FF=\{(C(\eta), \loss(\eta)), \eta\in\Delta\}$. 
\begin{corollary}[Coupling and frontiers]
  \label{cor:=frontier}
  Let  $\param_1$ and  $\param_2$ be coupled  SIS models, with conjugate
  parameters $\gamma$, $\costad$ and $k$. For any of the two choices $\loss\in  \{R_e, \I\}$,
  the models $\param_1$ and $\param_2$ have the same set of outcomes $\FF$
  and the same (anti-)Pareto frontiers $\F$ and $\AF$. 
\end{corollary}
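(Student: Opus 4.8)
The plan is to read off Corollary~\ref{cor:=frontier} from Theorem~\ref{thr:main}; the only real work is matching the standing hypothesis ``conjugate parameters $\gamma$, $\costad$, $k$'' to what item~(\ref{it:main}) requires for each loss. Throughout I read ``conjugate'' as: $(\gamma_1,\gamma_2)$ and $(\costad_1,\costad_2)$ are conjugate for the coupling $\pi$ of the feature spaces, while $(k_1,k_2)$ is conjugate for the extended coupling $\extended$ (the natural pairing, since each $k_i$ is a function on $\traits_i^2$).

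For $\loss=\I$ there is nothing to do at the level of hypotheses: the assumptions of items~(\ref{it:th-I}) and~(\ref{it:main}) are exactly the standing ones. For $\loss=R_e$ I first check that $(\kkk_1,\kkk_2)=(k_1/\gamma_1,k_2/\gamma_2)$ is conjugate for $\extended$. By Lemma~\ref{lem:conj0}, conjugacy of $(k_1,k_2)$ for $\extended$ means $k_1(X_1,Y_1)=k_2(X_2,Y_2)$ $\extended$-a.s., while conjugacy of $(\gamma_1,\gamma_2)$ for $\pi$ means $\gamma_1(Z_1)=\gamma_2(Z_2)$ $\pi$-a.s., hence $\gamma_1(Y_1)=\gamma_2(Y_2)$ $\extended$-a.s.\ because $(Y_1,Y_2)$ has law $\pi$ under $\extended$ (alternatively one invokes Lemma~\ref{lem:extended} for the second coordinate). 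Since the $\gamma_i$ are positive, the quotient of these two almost-sure identities gives $\kkk_1(X_1,Y_1)=\kkk_2(X_2,Y_2)$ $\extended$-a.s., i.e.\ $(\kkk_1,\kkk_2)$ is conjugate for $\extended$ by Lemma~\ref{lem:conj0} again. Together with the conjugacy of $(\costad_1,\costad_2)$, this gives the hypotheses of items~(\ref{it:th-Re}) and~(\ref{it:main}).

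With item~(\ref{it:main}) available for the chosen $\loss$, its closing sentence says that for every $\eta\in\Delta_1$ the conjugate $\eta^*$ lies in $\Delta_2$ with $C[\param_1](\eta)=C[\param_2](\eta^*)$ and $\loss[\param_1](\eta)=\loss[\param_2](\eta^*)$; thus the outcome of $\eta$ in $\param_1$ is also an outcome in $\param_2$, so $\FF[\param_1]\subseteq\FF[\param_2]$. The whole setup is symmetric in $\param_1$ and $\param_2$ (Lemma~\ref{lem:conj0} characterizes conjugacy by the symmetric relation $f_1(Z_1)=f_2(Z_2)$ $\pi$-a.s., and the derived conjugacy of $(\kkk_1,\kkk_2)$ is likewise symmetric), so the same argument with the models swapped yields the reverse inclusion, hence $\FF[\param_1]=\FF[\param_2]$. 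For the frontiers I would quote the ``in particular'' part of item~(\ref{it:main}): $\eta\mapsto\eta^*$ maps $\cP[\param_1]$ into $\cP[\param_2]$ and $\cpa[\param_1]$ into $\cpa[\param_2]$ while preserving outcomes, and symmetrically the conjugation from $\param_2$ to $\param_1$ does the same; therefore $\F[\param_1]=\F[\param_2]$ and $\AF[\param_1]=\AF[\param_2]$. One may equally note that $\F$ and $\AF$ are, by their very definitions, the coordinatewise-minimal, respectively -maximal, points of $\FF$, so they are determined by $\FF$ alone.

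I do not foresee a genuine obstacle. The one point deserving care is the bookkeeping of which coupling ($\pi$ or $\extended$) each conjugacy refers to, and in particular pushing the $\pi$-conjugacy of $\gamma$ into the extended coupling so that it can be combined with that of $k$ to produce the conjugacy of $\kkk$ needed for the $R_e$ case; Lemma~\ref{lem:conj0} reduces all of this to manipulating almost-sure equalities of pulled-back functions, which behaves well under products and quotients of positive functions, so it is routine.
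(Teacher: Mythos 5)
Your proposal is correct and follows essentially the route the paper intends: Corollary~\ref{cor:=frontier} is stated there as a direct consequence of Theorem~\ref{thr:main}~(\ref{it:main}), applied to each $\eta\in\Delta_1$ and, by the symmetry of conjugation, to each $\eta\in\Delta_2$. Your explicit check that conjugacy of $(k_1,k_2)$ (extended coupling) and of $(\gamma_1,\gamma_2)$ yields conjugacy of $(k_1/\gamma_1,k_2/\gamma_2)$ via the a.s.\ characterization of Lemma~\ref{lem:conj0} correctly fills in the one step the paper leaves implicit.
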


\begin{remark}[Obvious couplings]
  \label{rem:couplage}
  %%% REMARK: COUPLING
 % \begin{enumerate}[(i)]
 % \item  \label{it:costad}
    If  the costs  are uniform  in both  models
    $\param_1$  and   $\param_2$,  then the pair $(\costad_1,\costad_2)$ is
    trivially conjugate as both functions are a.s.\ constant equal to $1$.

    Using a trivial coupling, one sees that
the recovery rate and transmission kernel in
    the SIS model could have been defined only almost everywhere without
    affecting the set of outcomes and the (anti-)Pareto frontiers. 
  \end{remark}

  The coupling hypotheses are strong and give strong results, allowing
  to compare equilibria and vaccinations between models. Let us note that
  other, weaker ways of comparing models exist, and may yield interesting results.
  \begin{remark}[Life without coupling --- normalizing $\gamma$ and $\costad$]
    If we are only interested in the loss function $\loss=R_e$, various invariance properties
    of the spectral radius  may be used to
    normalize models. Indeed, consider a SIS model $\param=[(\traits,  \cf, \mu), (k, \gamma), \costad]$
    for which  both $\gamma$ and $\costad$ are bounded away from zero, and assume
    without loss of generality that $\int_\traits \costad\, \mathrm{d}\mu = 1$. 
   Define another model by 
   $\param_0=[(\traits, \cf, \mu_0), (k_0, \gamma_0), \costad_0]$, where:
   \begin{align*}
     \mu_0(\mathrm{d} x) &= \costad(x) \, \mu(\mathrm{d} x),
     &
       k_0 &= k/(\costad\gamma),
     &
       \gamma_0  &= \costad_0 = \un.
   \end{align*}
Notice that as~\eqref{eq:bded} holds
for the model $\param$, then it also holds for the model $\param_0$ as
we assumed $\costad$ to be bounded away from 0.

We trivially have
$\Delta(\param)=\Delta(\param_0)$.  Clearly, we have $C(\eta)$ for the
model $\param$ is equal to $\costu(\eta)$ for the model $\param_0$.
Using also that $L^p(\mu)$ and $L^p(\mu_0)$ are compatible
(see~\cite[Lemma~2.2]{ddz-Re}) and the corresponding integral
operators are consistent (see~\cite[Section~2.2]{ddz-Re} and
Lemma~2.1(iii)), we get that $ R_e (\eta)$ for the model $\param$ is
equal to $ R_e (\eta)$ for the model $\param_0$, for all strategies
$\eta\in \Delta$. In particular the (anti-)Pareto optimal strategies
and the (anti-)Pareto frontiers are the same for the two models.
Therefore 
we may focus on $\param_0$ and assume without loss of generality that the only dependence on the features
is in the transmission kernel, while  both the vaccination cost and
the 
recovery rate are uniform.
\end{remark}

\section{Examples of couplings}
\label{sec:exple-couple}

We discuss three examples, all of which are built on the following special case
of coupling, each one taking a slightly different point of view.

\begin{lemma}[Deterministic coupling]
  \label{lem:deterministic_coupling}
  Let  $(E_1,\ce_1,\mu_1)$ and $(E_2,\ce_2,\mu_2)$ be two probability spaces
  and assume that $\phi:E_1\to E_2$ is measurable and pushes $\mu_1$ forward to $\mu_2$.
  Then $E_1$ and $E_2$ are coupled through
  $(X_1,\phi(X_1))$, with $X_1 \sim \mu_1$,
and  for any
  two functions $f_i\in L^1(E_i)$, $i=1,2$ we have, with $\E_1$ the
  expectation w.r.t.\  $\mu_1$: 
  \begin{enumerate}
    \item $f_2^*=f_2 \circ \phi$, $f_1^* \circ \phi=\E_1[f_1 \, |\, 
        \sigma(\phi)]$ and $f_2^{**} = f_2$;
  \item The pair $(f_1,f_2)$ is  conjugate if and only if $f_1 = f_2\circ \phi$;
  \item The pair $(f_1,f_2)$ is  pre-conjugate if and only if
    $f_2 = f_1^*$;
      \item The pair of kernels $(k_1, k_2)$ (respectively on $\bE_1$
        and $\bE_2$) is  conjugate
        (through the extended coupling) 
        if and only if $\mu_1(\rd x_1)\otimes\mu_1(\rd y_1)$-a.e.\  $k_1(x_1,y_1) = k_2(\phi(x_1), \phi(y_1))$.
        
  \end{enumerate}
\end{lemma}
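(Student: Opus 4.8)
The plan is to reduce all four assertions to a single observation: for the deterministic coupling $Z=(X_1,\phi(X_1))$ (with $X_1\sim\mu_1$, so $Z_1=X_1$ and $Z_2=\phi\circ X_1$), the common $\sigma$-field satisfies $\common=\sigma(Z_1)\cap\sigma(Z_2)=\sigma(Z_2)$, because $\sigma(Z_2)=X_1^{-1}(\phi^{-1}(\ce_2))\subset X_1^{-1}(\ce_1)=\sigma(Z_1)$. Once this is noted, the conjugation operation becomes almost transparent.

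First I would prove item~1. Since $f_2(Z_2)$ is $\sigma(Z_2)=\common$-measurable, the conditioning defining the conjugate is trivial: $f_2^*(Z_1)=\esp{f_2(Z_2)\given\common}=f_2(Z_2)=(f_2\circ\phi)(Z_1)$, and because $Z_1\sim\mu_1$ this gives $f_2^*=f_2\circ\phi$ in $L^1(E_1,\mu_1)$ (the target space being correct since $\int|f_2\circ\phi|\,\rd\mu_1=\int|f_2|\,\rd\mu_2<\infty$ by~\eqref{eq:push}). Running the same computation once more, now with $f_2\circ\phi$ in the role of the input function, yields $f_2^{**}(Z_2)=\esp{(f_2\circ\phi)(Z_1)\given\common}=\esp{f_2(Z_2)\given\common}=f_2(Z_2)$, i.e.\ $f_2^{**}=f_2$ (this is also just item~\ref{it:ff**} of Lemma~\ref{lem:conj1}, read with $E_1$ and $E_2$ exchanged, since $f_2(Z_2)$ is $\common$-measurable). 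For the remaining identity $f_1^*\circ\phi=\E_1[f_1\,|\,\sigma(\phi)]$, I would verify the defining property of the conditional expectation on $(E_1,\ce_1,\mu_1)$: the function $f_1^*\circ\phi$ is $\sigma(\phi)$-measurable, and for $A=\phi^{-1}(B)$ with $B\in\ce_2$, using that $\ind{A}(Z_1)=\ind{B}(Z_2)$ is $\common$-measurable together with~\eqref{eq:push},
\[
  \int_A f_1^*\circ\phi\,\rd\mu_1=\int_B f_1^*\,\rd\mu_2
  =\esp{\esp{f_1(Z_1)\given\common}\,\ind{B}(Z_2)}
  =\esp{f_1(Z_1)\,\ind{A}(Z_1)}=\int_A f_1\,\rd\mu_1.
\]

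Items~2 and~3 would then follow immediately. For item~2, Lemma~\ref{lem:conj0} says $(f_1,f_2)$ is conjugate iff $f_1(Z_1)=f_2(Z_2)$ $\pi$-a.s.; since $\pi$ is the law of $(X_1,\phi(X_1))$ this reads $f_1=f_2\circ\phi$ $\mu_1$-a.e. (alternatively, $f_1=f_2^*$ means $f_1=f_2\circ\phi$ by item~1, and then automatically $f_2=f_2^{**}=f_1^*$). For item~3, unwinding the definition of pre-conjugacy, $(f_1,f_2)$ is pre-conjugate iff the pair $(f_2^*,f_1^*)$ is conjugate, i.e.\ $f_2^*=f_1^{**}$ and $f_1^*=f_2^{**}$; but $f_2^{**}=f_2$ always, so the second equation is $f_1^*=f_2$, and conversely $f_1^*=f_2$ forces $f_1^{**}=f_2^*$, so pre-conjugacy is equivalent to the single condition $f_2=f_1^*$.

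Finally, for item~4, I would observe that the extended coupling of $\bE_1=E_1\times E_1$ and $\bE_2=E_2\times E_2$ is itself a deterministic coupling: with $(\bZ_1,\bZ_2)=((X_1,Y_1),(X_2,Y_2))$ and $(X_1,X_2)$, $(Y_1,Y_2)$ independent of law $\pi$, the fact that $\pi$ is carried by the graph of $\phi$ gives a.s.\ $X_2=\phi(X_1)$ and $Y_2=\phi(Y_1)$, so $\bZ_2=\bm{\phi}(\bZ_1)$ where $\bm{\phi}(x_1,y_1)=(\phi(x_1),\phi(y_1))$, while $\bZ_1\sim\mu_1\otimes\mu_1$ and $\bm{\phi}$ pushes $\mu_1\otimes\mu_1$ forward to $\mu_2\otimes\mu_2$. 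Applying item~2 to this deterministic coupling gives that $(k_1,k_2)$ is conjugate iff $k_1=k_2\circ\bm{\phi}$ a.e., that is $k_1(x_1,y_1)=k_2(\phi(x_1),\phi(y_1))$ for $\mu_1(\rd x_1)\otimes\mu_1(\rd y_1)$-a.e.\ $(x_1,y_1)$. The whole argument is routine; the only spot where I expect to have to be slightly careful is the transfer identity $f_1^*\circ\phi=\E_1[f_1\,|\,\sigma(\phi)]$, which is why I would check it directly against the characterization of conditional expectation rather than quote a change-of-variables result --- it is the one place where a conditional expectation living on the abstract probability space $(\Omega_0,\cf_0,\P)$ must be matched with one living on $(E_1,\ce_1,\mu_1)$.
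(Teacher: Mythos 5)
Your proof is correct, and since the paper explicitly leaves this lemma's proof to the reader ("elementary and left to the reader"), your write-up is essentially the intended argument: everything follows from the observation that $\sigma(Z_2)=X_1^{-1}(\phi^{-1}(\ce_2))\subset\sigma(Z_1)$, hence $\common=\sigma(Z_2)$, plus the direct verification of $f_1^*\circ\phi=\E_1[f_1\,|\,\sigma(\phi)]$ against the defining property of conditional expectation, and the remark that the extended coupling is again deterministic via $\bm{\phi}(x,y)=(\phi(x),\phi(y))$. The only cosmetic point: rather than saying "$\pi$ is carried by the graph of $\phi$" (which tacitly assumes the graph is product-measurable), it is cleaner to take the canonical version of the extended coupling with $X_2:=\phi(X_1)$ and $Y_2:=\phi(Y_1)$, which is consistent with the paper's convention of specifying couplings through a given random variable; this does not affect the validity of your argument.
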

The  proof is elementary and left to the reader. 

\subsection{Starting from $E_1$: model reduction using deterministic coupling}
\label{sec:exple-esp-cond}
We consider a SIS model
$\param_1=[(\traits_1,\cf_1,\mu_1),(k_1,\gamma_1), \costad_{1}]$. Let
$\phi$ be a measurable function from $(\traits_1, \cf_1)$ to
$(\traits_2, \cf_2)$, let $\mu_2$ be the push-forward $\phi_\#\mu_1$,
and consider the coupling given by $(X_1,\phi(X_1))$ where
$X_1\sim \mu_1$.  By Lemma~\ref{lem:deterministic_coupling}, the
functions $\costad_1$, $\gamma_1$ and $k_1$ will be part of conjugate
pairs for this coupling if and only if they all factor through $\phi$,
in the sense that for some functions $\costad_2$, $\gamma_2$ on
$\traits_2$ and $k_2$ on $\traits_2\times \traits_2$:
\begin{equation}
  \label{eq:subset-tribu}
  \costad_1  = \costad_2\circ\phi, \quad
  \gamma_1  = \gamma_2\circ\phi
  \quad\text{and}\quad
   k_1(\cdot, \cdot) = k_2(\phi(\cdot), \phi(\cdot)).
\end{equation}

  If that is the case, then by Theorem~\ref{thr:main} and
  Lemma~\ref{lem:deterministic_coupling}, the vaccination strategy 
  $\eta_1\in \Delta_1$ is (anti-)Pareto optimal for $\param_1$
  if and only if its conjugate   $\eta_1^*$ defined by
    $\eta_1^* \circ \phi =
    \esp{\eta_1\given \sigma(\phi)}$  is 
  (anti-)Pareto optimal for the
  simplified model $\param_2$.
  In words, the behaviour of an individual $x$ only depends on $\phi(x)$,
  and in the trait space $\traits_2$, individuals with identical behavior
  are merged.

We may deduce the following result.
\begin{corollary}[Model reduction]
  \label{cor:couplage}
  %%% COROLLARY: COUPLING
  Let $\param=[(\traits,\cf,\mu),(k,\gamma), \costad]$ be a SIS model
  with loss function $\loss\in \{R_e, \I\}$. Let $\cg\subset\cf$ be a
  $\sigma$-field such that $\gamma$ and $\costad$ are $\cg$-measurable
  and $k$ is $\cg\otimes\cg$-measurable. Then, for any  $\eta\in \Delta$, we have, with $\E_\mu$ the expectation
  w.r.t.\  $\mu$:
  \begin{equation}
    \label{eq:P0-h-Eh}
    \text{$\eta$ is (anti-)Pareto optimal} \quad \Longleftrightarrow\quad
    \text{$\E_\mu[\eta\, |\, \cg]$ is (anti-)Pareto optimal}.
  \end{equation}
\end{corollary}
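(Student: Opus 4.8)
The plan is to view Corollary~\ref{cor:couplage} as a particular instance of the model reduction of Section~\ref{sec:exple-esp-cond}, obtained by taking for $\phi$ the identity map, regarded as a measurable map $\mathrm{id}\colon (\traits,\cf)\to(\traits,\cg)$; this is measurable precisely because $\cg\subset\cf$. It pushes $\mu$ forward to its restriction $\mu_{|\cg}$, and the associated deterministic coupling (Lemma~\ref{lem:deterministic_coupling}) is carried by $(X_1,X_1)$ with $X_1\sim\mu$. I would then introduce the reduced model $\param_2=[(\traits,\cg,\mu_{|\cg}),(k,\gamma),\costad]$ and first check that it is an admissible SIS model in the sense of Section~\ref{sec:hyp}: $\gamma$ is positive, bounded and $\cg$-measurable, $\costad$ is nonnegative and $\cg$-measurable, and the two integrability requirements ($\norm{k/\gamma}_{\infty,q}<\infty$ computed against $\mu_{|\cg}$, and $\costad\in L^1(\traits,\cg,\mu_{|\cg})$) follow from the corresponding ones for $\param$, since the integral of a $\cg$-measurable function against $\mu$ equals its integral against $\mu_{|\cg}$ and the sections $k(x,\cdot)$ are $\cg$-measurable.

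Next I would check the parameter conjugacy hypotheses of Theorem~\ref{thr:main}. Since $\phi=\mathrm{id}$, the factorisation conditions \eqref{eq:subset-tribu} read $\costad=\costad$, $\gamma=\gamma$ and $k(\cdot,\cdot)=k(\cdot,\cdot)$, which hold once $\costad$ and $\gamma$ are $\cg$-measurable and $k$ is $\cg\otimes\cg$-measurable --- exactly the standing hypotheses of the corollary. By Lemma~\ref{lem:deterministic_coupling} the pairs $(\costad,\costad)$ and $(\gamma,\gamma)$ are then conjugate and $(k,k)$ is conjugate through the extended coupling; and since $\gamma$ is positive and $\cg$-measurable, $k/\gamma$ is $\cg\otimes\cg$-measurable, so the pair $(k/\gamma,k/\gamma)$ is conjugate through the extended coupling too. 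Thus, together with the conjugacy of the cost densities, the hypothesis of item~\ref{it:th-Re} of Theorem~\ref{thr:main} (for $\loss=R_e$) and the hypothesis of item~\ref{it:th-I} (for $\loss=\I$) are both satisfied.

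It then remains to identify conjugates of strategies and conclude. Fix $\eta\in\Delta$. By Lemma~\ref{lem:deterministic_coupling}(1), since $\sigma(\phi)=\cg$, the conjugate of $\eta$ is $\eta^*=\E_\mu[\eta\mid\cg]$, which indeed lies in $\Delta_2$ as it is $\cg$-measurable with values in $[0,1]$ a.s. I would now invoke the equivalence \eqref{eq:couple-PO} of item~\ref{it:main} of Theorem~\ref{thr:main} twice. First, applied to the pair $(\eta,\eta^*)$, which is pre-conjugate by item~\ref{it:ff*} of Lemma~\ref{lem:conj1}: $\eta$ is (anti-)Pareto optimal for $\param$ if and only if $\eta^*$ is (anti-)Pareto optimal for $\param_2$. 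Second, viewing $\eta^*$ also as an element of $\Delta$ (it is $\cg$-measurable, hence $\cf$-measurable), its own conjugate is $\E_\mu[\eta^*\mid\cg]=\eta^*$, so by Lemma~\ref{lem:deterministic_coupling}(2) the pair $(\eta^*,\eta^*)$ is conjugate, a fortiori pre-conjugate, and \eqref{eq:couple-PO} gives that $\eta^*$ is (anti-)Pareto optimal for $\param$ if and only if it is (anti-)Pareto optimal for $\param_2$. Chaining these two equivalences through $\param_2$ yields \eqref{eq:P0-h-Eh}.

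The argument is mostly bookkeeping through the coupling machinery already established; the two points deserving attention are the ones singled out above, namely verifying that $\param_2$ is an admissible model (finiteness of $\norm{k/\gamma}_{\infty,q}$ against the restricted measure), and keeping straight that the coupling here is the one carried by the identity map, so that ``factoring through $\phi$'' amounts to ``being $\cg$-measurable'' and the two applications of Theorem~\ref{thr:main} are made, respectively, with the merely pre-conjugate pair $(\eta,\eta^*)$ and with the conjugate pair $(\eta^*,\eta^*)$.
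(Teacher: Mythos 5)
Your proof is correct and follows essentially the same route as the paper: introduce the reduced model $\param_2$ with $\cf_2=\cg$ and $\mu_2=\mu_{|\cg}$, couple it to $\param$ through the identity map as a deterministic coupling, check the parameter conjugacies via Lemma~\ref{lem:deterministic_coupling}, and apply Theorem~\ref{thr:main}\ref{it:main} twice. The only (harmless) difference is bookkeeping at the end: you identify $\eta^*=\E_\mu[\eta\,|\,\cg]$ directly from Lemma~\ref{lem:deterministic_coupling} and use the conjugate pair $(\eta^*,\eta^*)$ for the second application, whereas the paper works with $(\eta^{**},\eta^*)$ and identifies $\eta^{**}$ as the conditional expectation by a short computation with $\common$.
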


\begin{proof}
  Denote with a subscript $1$ the parameters of the original model
  (e.g., set $\traits_1 = \traits$).  Let
  $\param_2 = [(\traits_2,\cf_2,\mu_2),(k_2,\gamma_2),\costad_2]$
  where most parameters are the same: $\traits_2 = \traits$,
  $k_2 = k$, $\gamma_2 = \gamma$, $\costad_2 = \costad$, but we equip
  $\traits_2$ with $\cf_2 = \cg$, and the measure
  $\mu_2 = (\mu_1)_{|\cg}$.  Note that this is legitimate, in the sense
  that the measurability hypotheses on $(k,\gamma,\costad)$, imply
  that $\gamma_2,\costad_2$ are measurable from $(\traits_2,\cf_2)$ to
  $(\dR,\cb(\dR))$ and $k_2$ is measurable on the product space
  $(\traits_2\times \traits_2, \cF_2\otimes \cf_2)$.

  Now we define $\phi:\traits_1\to \traits_2$ by $\phi(x) = x$; since
  $\cg\subset \cf$, $\phi$ is measurable from
  $(\traits_1,\cf_1) = (\traits,\cf)$ to
  $(\traits_2,\cf_2) = (\traits,\cg)$. This function defines a deterministic coupling
  between the two spaces. Since $\phi$ is the identity if we forget the
  measure structure, it is clear that $\gamma_1 = \gamma_2\circ \phi$,
  $\costad_1 = \costad_2\circ \phi$ and $k_1(\cdot,\cdot) = k_2(\phi(\cdot),\phi(\cdot))$,
  so that all three pairs of functions are conjugate, by Lemma~\ref{lem:deterministic_coupling}.
  Applying Theorem~\ref{thr:main} twice, we get that $\eta\in \Delta$ is Pareto-optimal for
  $\param_1$ if and only if $\eta^*$ is Pareto-optimal for $\param_2$, if
  and only if $\eta^{**}$ is Pareto-optimal  for
  $\param_1$.

  Let us finally  identify $\eta^{**}$. Let $X$ be $\mu_1$-distributed.
  The coupling is $(Z_1,Z_2)$ where $Z_1 = X$ and $Z_2 = \phi(Z_1)$, so
  $\common = \sigma(Z_1)\cap \sigma(Z_2) = \sigma(Z_2) = X^{-1}(\phi^{-1}(\cg)) = X^{-1}(\cg)$.
  By definition we have
  \( \eta^{**}(X) = \esp{\eta(X)\given X^{-1}(\cg)}. \)
We deduce that  $\eta^{**} =\E_\mu[\eta\, |\, \cg]$ as for any $B\in \cg$
and $A = X^{-1}(B)$: 
  \begin{align*}
    \esp{\eta(X) \ind{A}} = \esp{\eta(X) \ind{B}(X)} = \int_\traits
    \eta(x) \ind{B}(x) \, \mu_1(\rd x)
    &= \int_\traits \E_\mu[\eta\, |\, \cg](x)\,  \ind{B}(x) \, \mu_1(\rd x) \\
    &= \esp{ \E_\mu[\eta\, |\, \cg](X) \ind{A}}.\qedhere
    \end{align*}
%This gives that  $\eta^{**} =\E_\mu[\eta\, |\, \cg]$.
  \end{proof}

\subsection{Linking $E_1$ and $E_2$: discrete and continuous models}
\label{sec:dis-cont}

We now consider a particular case, and formalize how finite population models can be seen as
images of models with a continuous population.
We denote by $\cb([0,1))$ and by $\mathrm{Leb}$ the Borel~$\sigma$-field
and the Lebesgue measure on $[0,1)$.

Let
$\traits_\mathrm{d}\subset \N$,~$\cf_\mathrm{d}$ the set of subsets
of~$\traits_\mathrm{d}$ and~$\mu_\mathrm{d}$ a probability measure
on~$\traits_\mathrm{d}$. Without loss of generality, we can assume
that $\mu_\mathrm{d}(\{\ell\})>0$ for all
$\ell\in \traits_\mathrm{d}$. We set~$\traits_\mathrm{c}=[0, 1)$,
$\cf_\mathrm{c}=\cb([0,1))$ and let~$\mu_\mathrm{c}$ be a probability
measure on $(\traits_\mathrm{c},\cf_\mathrm{c})$ without atoms (for
example one can take the Lebesgue measure $\mathrm{Leb}$). Let
$(B_\ell, \ell\in\traits_\mathrm{d})$ be a partition of $[0,1)$ in
measurable sets such that
$\mu_\mathrm{c}(B_\ell) = \mu_\mathrm{d}(\{\ell\})$ for all
$\ell\in \traits_\mathrm{d}$.
The map $\phi:\traits_{\mathrm{c}} \to \traits_{\mathrm{d}}$ defined
by $\phi(x) = \sum \ell \ind{B_\ell}(x)$ clearly defines a deterministic
coupling between $\mu_{\mathrm{c}}$ and $\mu_{\mathrm{d}}$. 
 If the
kernels $k_\mathrm{d}$ on $\traits_\mathrm{d}$ and $k_\mathrm{c}$ on
$\traits_\mathrm{c}$ and the functions
$(\gamma_\mathrm{d}, \costad_{\mathrm{d}})$ and
$(\gamma_\mathrm{c}, \costad_{\mathrm{c}})$ are related through the
formula:
\[
  \gamma_\mathrm{c}(x)=\gamma_\mathrm{d}(\ell), \quad
  \costad_{\mathrm{c}}(x)=\costad_{\mathrm{d}}(\ell) 
  \quad\text{and}\quad
  k_\mathrm{c}(x,y) = k_\mathrm{d}(\ell,j) \quad \text{for $x\in
    B_\ell$, $y \in B_j$ and $\ell, j\in \traits_\mathrm{d}$},
\]
then all pairs are conjugate, and all the hypotheses of Theorem~\ref{thr:main}
an Corollary~\ref{cor:=frontier}
are satisfied. 

Roughly speaking, we can blow up
the atomic part of the measure~$\mu_\mathrm{d}$ into a continuous
part, or, conversely, merge all points that behave similarly for
$k_\mathrm{c}$, $\gamma_\mathrm{c}$ and $ \costad_{\mathrm{c}}$ into
an atom, without altering the Pareto frontier.

\begin{example}[The stochastic block model]
  \label{ex:sbm2}
  %%% EXAMPLE : SBM DIM 2
  To be more concrete, we consider the  so called stochastic block model, with
  2  populations for simplicity and  give in  this
  elementary case the corresponding  discrete and  continuous models. Then, we explicit
  the  relation  with the  formalism  of  the  same model  developed  in
  \cite{lajmanovich1976deterministic}   by    Lajmanovich   and
  Yorke.  For simplicity, we assume the cost is
    uniform (that is, $\costad=\un$), so that the
conjugation condition for the costs is trivially satisfied.
  \medskip

  The discrete SIS model is defined on $\traits_\mathrm{d}=\{1,2\}$
  with the probability measure $\mu_\mathrm{d}$ defined by
  $\mu_\mathrm{d}(\{1\})=1-\mu_\mathrm{d}(\{2\})=p$ with $p\in (0,1)$,
  and a transmission kernel~$k_\mathrm{d}$ and recovery
  function~$\gamma_\mathrm{d}$ given by the matrix and the vector:
  \[
    k_\mathrm{d}=\begin{pmatrix} k_{11} & k_{12}\\
    k_{21} & k_{22}
  \end{pmatrix}
  \quad\text{and}\quad
  \gamma_\mathrm{d}=\begin{pmatrix} \gamma_1\\ \gamma_2 \end{pmatrix}.
  \]
  Notice $p$ is the relative size of population 1.  The corresponding
  discrete model is
  $\param_\mathrm{d} =
  [(\{1,2\},\cf_\mathrm{d},\mu_\mathrm{d}),(k_\mathrm{d},\gamma_\mathrm{d}),\costad_{\mathrm{d}}=\un
  ]$; see Figure~\ref{fig:discrete-model}.
 
  \medskip

  The continuous SIS model is defined on the probability
  space~$(\traits_\mathrm{c}=[0, 1), \cf_\mathrm{c}=\cb([0,
  1)),\mu_\mathrm{c}=\mathrm{Leb})$.  The
  segment~$\traits_\mathrm{c}=[0,1)$ is partitioned into two
  intervals~$B_1=[0,p)$ and~$B_2=[p, 1)$, the transmission
  kernel~$k_\mathrm{c}$ and recovery rate $\gamma_\mathrm{c}$ are
  given by:
  \[
    k_\mathrm{c}(x,y) = k_{ij}
    \quad\text{and}\quad
    \gamma_\mathrm{c}(x)= \gamma_i
    \quad \text{for $x\in B_i$, $y\in B_j$, and  $i,j\in \{1, 2\}$}.
  \]
  The corresponding continuous model is
  $\param_\mathrm{c} = [(\traits_\mathrm{c},
  \cf_\mathrm{c},\mu_\mathrm{c}),
  (k_\mathrm{c},\gamma_\mathrm{c}),\costad_{\mathrm{c}} =\un ]$; see
  Figure~\ref{fig:continuous-model}.  By the general discussion above,
  these two models have the same (anti-)Pareto frontiers, and their
  equilibria and optimal vaccinations may be transferred to one another by
  conjugation. Let us note that, in this  example, by Lemma~\ref{lem:deterministic_coupling}
  a function $f_{\mathrm{d}}$ on $\traits_{\mathrm{d}} = \{1,2\}$ and
  $f_{\mathrm{c}}$ on $\traits_{\mathrm{c}}$ are~:
  \begin{itemize}
  \item pre-conjugate if and only if
    $\frac{1}{\mu_\mathrm{c}(B_i)} \int_{B_i} f_{\mathrm{c}}\, \rd \mu_
    \mathrm{c}  = f_{\mathrm{d}}(i)$, for $i=1,2$; 
  \item conjugate if and only if
    $ f_{\mathrm{c}}(x) = f_{\mathrm{d}}(i)$, a.e.\ for $x\in B_i$ and
    $i=1, 2$.
  \end{itemize}  
  Therefore,  in this  case, the  optimal strategies  of the  continuous
  model are easily  deduced from the optimal strategies  of the discrete
  model.

  To conclude this example,  using the formalism of the discrete model
$\param_\mathrm{d}$,  the next-generation matrix~$K$ in the setting of
  \cite{lajmanovich1976deterministic}, and the effective next-generation matrix~$K_e(\eta)$ when the
  vaccination strategy~$\eta$ is in force (recall~$\eta_i$ is the proportion of population
  with feature~$i$ which is not vaccinated), are given by:
  \[
    K=
    \begin{pmatrix}
      \kkk_{11} \,p\, & \kkk_{12} \,(1-p) \\
      \kkk_{21}\, p\, & \kkk_{22} \,(1-p)
    \end{pmatrix}
    \quad\text{and}\quad
    K_e(\eta)
    = \begin{pmatrix}
      \kkk_{11} \,p\, \eta_1 & \kkk_{12} \,(1-p)\, \eta_2 \\
      \kkk_{21}\, p\, \eta_1 & \kkk_{22} \,(1-p)\, \eta_2
      \end{pmatrix}
      \quad\text{with}\quad
      \kkk_{ij}=k_{ij}/ \gamma_j.
  \]
\end{example}
  % =
  %   \begin{pmatrix}
  %     k_{11}/\gamma_1\, & k_{12}/\gamma_2 \\
  %     k_{21}/\gamma_1\, & k_{22}/\gamma_2
  %   \end{pmatrix}.

\begin{figure}
 % Store largest image in a box
 \savebox{\largestimage}{\includestandalone[page=1]{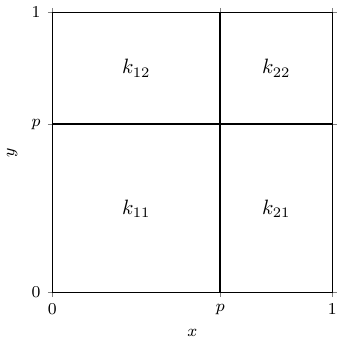}}
 \begin{subfigure}[b]{0.48\textwidth}
 \centering
 % Adjust vertical height of smaller image
 \raisebox{\dimexpr.5\ht\largestimage-.5\height}{%
 \includestandalone[page=2]{fig}}
 \caption{Discrete model: kernel~$k_\mathrm{d}$ on
 $\traits_\mathrm{d}=\{1, 2\}$ with the measure
 $\mu_\mathrm{d}=p\delta_1+ (1-p)\delta_2$.}
 \label{fig:discrete-model}
 \end{subfigure}
 \begin{subfigure}[b]{0.48\textwidth} \centering
 \usebox{\largestimage}
 \caption{Continuous model: kernel~$k_\mathrm{c}$ on~$\traits_\mathrm{c}=[0,1)$
 with the Lebesgue measure $\mu_\mathrm{c}$.}
 \label{fig:continuous-model}
 \end{subfigure}
 \quad
 \caption{Coupled discrete
   model (left) and continuous model (right).}
 \label{fig:discrete-and-continuous}
\end{figure}

\subsection{%\texorpdfstring{%
    Starting from \texorpdfstring{$E_2$}{E2}: measure preserving function}
\label{sec:preserver}
Finally, let us briefly discuss an example motivated by the theory of
graphons, which are indistinguishable by measure preserving
transformation, see \cite[Sections~7.3 and~10.7]{lovasz_large_2012}.

Let~$(\traits, \cf, \mu)$ be a probability space.  We say a measurable
function~$\varphi: \traits \rightarrow \traits$  is \emph{measure
  preserving} if~$\mu= \varphi _\# \mu$. For example the
function~$\varphi:x\mapsto 2x \mod(1)$ defined on the probability
space~$([0, 1], \cb([0, 1], \mathrm{Leb})$ is measure preserving. Note
it is not one-to-one in general.

Now  consider a  SIS model  with parameters  $\param_2 =
[(\traits,\cf,\mu),(k,\gamma),\costad]$  and a
measure         preserving        function         $\phi$.        Define
$\gamma_1  =  \gamma\circ  \phi$,  $\costad_1  =  \costad\circ\phi$  and
$k_1(\cdot,\cdot)     =     k_2(\phi(\cdot),     \phi(\cdot))$.     Then
the models $\param_1     =    [(\traits,\cf,\mu),(k_1,\gamma_1),\costad_1]$     and
$\param_2 $  are coupled and all
consequences of Theorem~\ref{thr:main} and Corollary~\ref{cor:=frontier}
hold.  Roughly speaking, we can give different labels to the features of
the population without altering the (anti-)Pareto frontiers.

\section{Proofs}
\label{sec:proof-coupling}
\subsection{Elementary properties of conjugation}

% Notice that for $f_i\in L^1(E_i)$, we deduce from
% Lemma~\ref{lem:conj1}~\ref{it:ff*} and by symmetry that
% $f_i^{***}=f_i^*$. 

We give further technical properties of the conjugation.

\begin{lemma}[Other properties of conjugation]
  \label{lem:conj}
Let  $(E_1,\ce_1,\mu_1)$ and $(E_2,\ce_2,\mu_2)$ be coupled through $Z =
(Z_1,Z_2)$.  Let $f\in L^1(E_1)$ and $f_i\in L^1(E_i)$, $i=1,2$.  

  \begin{enumerate}[(i)]
    \item  We have      $f^{***} = f^*$. 
\item \label{it:biconj}  Let $g\in L^\infty (E_1)$.
  If $f^{**} = f$, then we have $(fg)^* = f^* g^*$. 
   
       \item  \label{it:integral-pre} If the pair $(f_1,f_2)$ is pre-conjugate, then 
         $\int_{E_1} f_1\,  \rd\mu_1 = \int_{E_2} f_2\, \rd\mu_2$.  
\item \label{it:preconjfg}
Let  $g_i\in L^\infty (E_i)$, $i=1,2$  If the pair $(f_1,f_2)$ is conjugate and the pair $(g_1,g_2)$ is pre-conjugate,
  then the pair $(f_1g_1, f_2g_2)$ is pre-conjugate.
% \item\label{it:sufficient}
%   If there exists
%   $V = v(Z_1,Z_2)$ such that:
%  \begin{equation}
%    \label{eq:V|Xi=}
%     \esp{V|\, Z_i}=f_i(Z_i) \quad\text{for $i\in \{1, 2\}$}, 
%   \end{equation}
%   then $f_1$ and $f_2$ are pre-conjugate. 
\end{enumerate}
\end{lemma}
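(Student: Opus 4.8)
The plan is to reduce each of the four items to a one- or two-line computation with conditional expectations relative to $\common=\sigma(Z_1)\cap\sigma(Z_2)$. The tools are all from the excerpt: the defining relation $f_1^*(Z_2)=\esp{f_1(Z_1)\given\common}$ (and symmetrically $f_2^*(Z_1)=\esp{f_2(Z_2)\given\common}$); the identity $f^{**}(Z_1)=f^*(Z_2)$ obtained in the proof of Lemma~\ref{lem:conj1}; the equivalence in Lemma~\ref{lem:conj1}\ref{it:ff**} between $f=f^{**}$, the pair $(f,f^*)$ being conjugate, and $f(Z_1)$ being $\common$-measurable; and Lemma~\ref{lem:conj0}, which says $(h_1,h_2)$ is conjugate iff $h_1(Z_1)=h_2(Z_2)$ $\pi$-a.s., in which case both sides are $\common$-measurable. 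I will also use throughout that $\int_{E_i}h\,\rd\mu_i=\esp{h(Z_i)}$ since $Z_i\sim\mu_i$, that the product of an $L^1$ function and an $L^\infty$ function lies in $L^1$, and that conditional expectation is a contraction on $L^\infty$; these guarantee that all the products written below are well defined.

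For~(i) I would argue directly: by definition $f^{***}(Z_2)=\esp{f^{**}(Z_1)\given\common}$, and since $f^{**}(Z_1)=f^*(Z_2)$ is both $\sigma(Z_1)$- and $\sigma(Z_2)$-measurable, hence $\common$-measurable, this conditional expectation equals $f^*(Z_2)$; thus $f^{***}=f^*$. For~(ii), the hypothesis $f^{**}=f$ makes $f(Z_1)$ $\common$-measurable by Lemma~\ref{lem:conj1}\ref{it:ff**}, so it factors out of the conditional expectation defining $(fg)^*$:
\[
  (fg)^*(Z_2)=\esp{f(Z_1)g(Z_1)\given\common}=f(Z_1)\,\esp{g(Z_1)\given\common}=f(Z_1)\,g^*(Z_2)=f^*(Z_2)\,g^*(Z_2),
\]
the last step using $f(Z_1)=f^{**}(Z_1)=f^*(Z_2)$; hence $(fg)^*=f^*g^*$.

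For~(iii), pre-conjugacy of $(f_1,f_2)$ is by definition the conjugacy of $(f_2^*,f_1^*)$, so Lemma~\ref{lem:conj0} gives $f_2^*(Z_1)=f_1^*(Z_2)$ $\pi$-a.s.; the tower property then yields
\[
  \int_{E_1}f_1\,\rd\mu_1=\esp{f_1(Z_1)}=\esp{f_1^*(Z_2)}=\esp{f_2^*(Z_1)}=\esp{f_2(Z_2)}=\int_{E_2}f_2\,\rd\mu_2.
\]
For~(iv), since $(f_1,f_2)$ is conjugate, Lemma~\ref{lem:conj0} gives $f_1(Z_1)=f_2(Z_2)$ $\pi$-a.s.\ with both $\common$-measurable; factoring these out of the relevant conditional expectations gives $(f_1g_1)^*(Z_2)=f_1(Z_1)\,g_1^*(Z_2)$ and $(f_2g_2)^*(Z_1)=f_2(Z_2)\,g_2^*(Z_1)$. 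Pre-conjugacy of $(g_1,g_2)$ means $(g_2^*,g_1^*)$ is conjugate, so $g_1^*(Z_2)=g_2^*(Z_1)$ $\pi$-a.s.\ by Lemma~\ref{lem:conj0}; combined with $f_1(Z_1)=f_2(Z_2)$ this gives $(f_1g_1)^*(Z_2)=(f_2g_2)^*(Z_1)$ $\pi$-a.s. By Lemma~\ref{lem:conj0} once more, this is exactly the conjugacy of $((f_2g_2)^*,(f_1g_1)^*)$, i.e.\ the pre-conjugacy of $(f_1g_1,f_2g_2)$.

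I do not anticipate a genuine obstacle: once Lemmas~\ref{lem:conj0} and~\ref{lem:conj1} are in hand, everything is routine $\sigma$-algebra and conditional-expectation bookkeeping. The only places where some care is needed are checking integrability of the products that appear (so that $f^*g^*$, $f_1g_2^*$, etc.\ are legitimate elements of the stated $L^1$ spaces), and, in~(iv), noticing that no second conjugates need to be computed: establishing that the two candidate conjugates agree as random variables is, via Lemma~\ref{lem:conj0}, already the conjugacy of the pair, which is all the definition of pre-conjugacy of $(f_1g_1,f_2g_2)$ demands.
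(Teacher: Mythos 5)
Your proof is correct and follows essentially the same route as the paper: (i)--(iii) are the paper's argument verbatim (conjugacy of $(f^{**},f^*)$, pulling out the $\common$-measurable factor, and taking expectations of $\esp{f_1(Z_1)\given\common}=f_1^*(Z_2)=f_2^*(Z_1)=\esp{f_2(Z_2)\given\common}$), and your explicit computation in (iv) is just the paper's terse ``direct consequence of (ii) and Lemma~\ref{lem:conj0}'' written out, correctly noting that checking $(f_1g_1)^*(Z_2)=(f_2g_2)^*(Z_1)$ a.s.\ already yields the required conjugacy of the pair of conjugates.
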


\begin{proof}
 Since $(f^{**}, f^*)$ is conjugate by Lemma~\ref{lem:conj1}, we deduce that $f^{***}=f^*$ by
 definition of conjugation.
 To             prove~\ref{it:biconj}              note             that
 $(fg)^*(Z_2)  =   \esp{f(Z_1)g(Z_1)\given\common}$,  but   $f(Z_1)$  is
 $\common$-measurable  as  $f^{**}=f$, so  we  may  pull it  out.  Since
 $f(Z_1)  = f^*(Z_2)$  and $\esp{g(Z_1)\given\common}  = g^*(Z_2)$,  the
 result follows.

If $(f_1, f_2)$ is pre-conjugate, we have $
    \esp{ f_1(Z_1) \given \common} = f^*_1(Z_2)
= f_2^*(Z_1)   = \esp{f_2(Z_2)  \given \common}$; then   take the expectation to get~\ref{it:integral-pre}.
 Point~\ref{it:preconjfg} is a direct consequence of
 Point~\ref{it:biconj} and Lemma~\ref{lem:conj0}. 
 \end{proof}

 \subsection{Proof of Lemma~\ref{lem:extended} and a key lemma}
 \label{sec:proof-extended}
 % We  consider the
 % SIS  model $\param=[(\traits,  \cf,  \mu), (k,  \gamma), \costad]$.

 Let us first  recall an elementary  result on conditional
independence.  The
random variables  we consider  are  defined on a probability  space, say
$(\Omega_0, \cf_0, \P)$.
 Let $\ca$,  $\cb$ and $\ci$ be  sub-$\sigma$-fields  of
$\cf_0$.  We recall  that $\ca$ and $\cb$  are conditionally independent
given  $\ci$, denoted by $\ca\independent_\ci \cb$,
if $\prb{A \cap  B \given \ci}=\prb{A\given \ci}  \prb{B \given \ci}$  for all
$A\in   \ca$  and   $B\in  \cb$.  According   to  \cite[Theorem
8.9]{Kal21}, if $\ci\subset \ca\cap\cb$, the conditional independence
$ \ca \independent_\ci  \cb$
holds if and only if 
\begin{equation}\label{eq:kall}
  \esp{W|\, \cb}=\esp{W|\, \ci} \text{ for any nonnegative $\ca$-measurable variable $W$.}
\end{equation}

We start by a probabilistic result. 
\begin{lemma}
  \label{lem:prob_key}
  Let $E_1$ and $E_2$ be  coupled, and $\bE_1$ and $\bE_2$ coupled
  through the extended coupling
  $(\bZ_1,\bZ_2) = ((X_1,Y_1),(X_2,Y_2))$. Let  $\common =
  \sigma(\bZ_1)\cap\sigma(\bZ_2)$, $\common_X = \sigma(X_1)\cap\sigma(X_2)$ and
  $\common_Y = \sigma(Y_1)\cap\sigma(Y_2)$.
  \begin{enumerate}[(i)]
  \item
    \label{it:cond_ind}
      The following conditional independence holds:
\(
        \sigma(X_1,X_2) \independent_{\common_X} \common
\) and $\sigma(Y_1,Y_2)\independent_{\common_Y} \common$. 
\item \label{it:KV}
  Let $K,V$ be nonnegative random variables. 
  If $K$ is $\common$-measurable and $V$ is $\sigma(Y_1,Y_2)$-measurable,
  then we have:
  \begin{equation}
    \label{eq:prob_key}
    \esp{KV\given X_i} = \esp{ K \, \esp{V\given\common} \given
      \common_X} = \esp{K\,  \esp{V\given\common}\given X_i},
    \quad i = 1,2. 
  \end{equation}
\end{enumerate}
\end{lemma}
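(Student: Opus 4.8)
The plan is to deduce both items from the characterization~\eqref{eq:kall} of conditional independence; the only substantial point is the identity $\common=\common_X\vee\common_Y$ up to $\P$-null sets. For part~\ref{it:cond_ind}, first note $\common_X,\common_Y\subset\common$ because $\sigma(X_i)\subset\sigma(\bZ_i)$, so by~\eqref{eq:kall} it suffices to prove $\E[W\mid\common]=\E[W\mid\common_X]$ for every nonnegative $\sigma(X_1,X_2)$-measurable $W$ (the statement for $Y$ being symmetric). The key preliminary is that for bounded $\common$-measurable $M$ the variable $\E[M\mid\sigma(X_1,X_2)]$ is $\common_X$-measurable: writing $M=u(X_1,Y_1)=v(X_2,Y_2)$ and using that $(Y_1,Y_2)$ is independent of $(X_1,X_2)$ under $\extended$, one gets $\E[M\mid\sigma(X_1,X_2)]=\bar u(X_1)=\bar v(X_2)$ $\pi$-a.s.\ with $\bar u(x)=\E[u(x,Y_1)]$ and $\bar v(x)=\E[v(x,Y_2)]$, so by Lemma~\ref{lem:conj0} applied to the coupling $(X_1,X_2)$ the pair $(\bar u,\bar v)$ is conjugate and $\bar u(X_1)$ is $\common_X$-measurable. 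Then, for bounded nonnegative $\sigma(X_1,X_2)$-measurable $W$ and arbitrary bounded $\common$-measurable $M$, moving $M$ and $W$ through the relevant conditional expectations yields
\[
\E\big[M\,\E[W\mid\common]\big]=\E[MW]=\E\big[W\,\E[M\mid\sigma(X_1,X_2)]\big]=\E\big[\E[W\mid\common_X]\,\E[M\mid\sigma(X_1,X_2)]\big]=\E\big[M\,\E[W\mid\common_X]\big];
\]
since $\E[W\mid\common]-\E[W\mid\common_X]$ is $\common$-measurable, taking $M$ to be its sign forces $\E[W\mid\common]=\E[W\mid\common_X]$, and monotone convergence removes the boundedness of $W$.

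Next I would establish $\common=\common_X\vee\common_Y$ up to null sets, only "$\subset$'' requiring an argument. Given bounded $\common$-measurable $M$, work in $L^2(\sigma(X_1,X_2,Y_1,Y_2))=L^2(\sigma(X_1,X_2))\otimes L^2(\sigma(Y_1,Y_2))$, which is legitimate because $(X_1,X_2)$ and $(Y_1,Y_2)$ are independent; the same independence argument shows that on this tensor product $\E[\,\cdot\mid\sigma(\bZ_i)]=\E[\,\cdot\mid\sigma(X_i)]\otimes\E[\,\cdot\mid\sigma(Y_i)]$ for $i=1,2$. As $M$ is fixed by these two projections it is fixed by every power of $\big(\E[\,\cdot\mid\sigma(X_1)]\E[\,\cdot\mid\sigma(X_2)]\big)\otimes\big(\E[\,\cdot\mid\sigma(Y_1)]\E[\,\cdot\mid\sigma(Y_2)]\big)$; by von Neumann's alternating-projection theorem these powers converge strongly to $\E[\,\cdot\mid\common_X]\otimes\E[\,\cdot\mid\common_Y]$ (using that $L^2(\sigma(X_1))\cap L^2(\sigma(X_2))$ equals $L^2(\common_X)$ up to null sets), so $M=\E[M\mid\common_X\vee\common_Y]$.

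For part~\ref{it:KV}, fix $i$. The variable $U:=K\,\E[V\mid\common]$ is $\common$-measurable and, by part~\ref{it:cond_ind} together with symmetry and restriction of conditional independence, $\common\independent_{\common_X}\sigma(X_i)$; hence~\eqref{eq:kall} gives $\E[U\mid X_i]=\E[U\mid\common_X]$, the second equality of~\eqref{eq:prob_key}. For the first, observe that $KV$ is measurable with respect to $\common\vee\sigma(Y_1,Y_2)=\common_X\vee\sigma(Y_1,Y_2)$ (this is where $\common\subset\common_X\vee\common_Y$ enters); from $\sigma(X_i)\independent\sigma(Y_1,Y_2)$ and $\common_X\subset\sigma(X_i)$ the elementary rules for conditional independence give $\sigma(X_i)\independent_{\common_X}\big(\common_X\vee\sigma(Y_1,Y_2)\big)$, so~\eqref{eq:kall} yields $\E[KV\mid X_i]=\E[KV\mid\common_X]$; and since $\common_X\subset\common$ and $K$ is $\common$-measurable, $\E[KV\mid\common_X]=\E\big[\E[KV\mid\common]\mid\common_X\big]=\E\big[K\,\E[V\mid\common]\mid\common_X\big]$.

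I expect the main obstacle to be the identity $\common=\common_X\vee\common_Y$: the inclusion $\common_X\vee\common_Y\subset\common$ is immediate, but the reverse one genuinely uses the product structure of the extended coupling — via the alternating-projection theorem and the standard (but slightly delicate, because of completion issues) identification of $L^2(\sigma(X_1))\cap L^2(\sigma(X_2))$ with $L^2(\common_X)$. Once that identity is available, both items reduce to routine bookkeeping with~\eqref{eq:kall} and the elementary rules for conditional independence.
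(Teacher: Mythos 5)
Your proof is correct and reaches both conclusions, but for the harder part~\ref{it:KV} it takes a genuinely different route from the paper. For part~\ref{it:cond_ind} the key observation is the same as in the paper's proof — namely that for $\common$-measurable $M=u(X_1,Y_1)=v(X_2,Y_2)$ the independence of $(X_1,X_2)$ and $(Y_1,Y_2)$ forces $\esp{M\given X_1,X_2}=\bar u(X_1)=\bar v(X_2)$, hence $\common_X$-measurability — you merely access the conditional independence through the other side of~\eqref{eq:kall} (testing against $\common$-measurable $M$ and a sign argument) where the paper uses the tower property directly; this costs an extra duality step but changes nothing essential. For part~\ref{it:KV} the paper avoids any global statement about $\common$: it writes $K=k(X_1,Y_1)$ \emph{and} $K=k'(X_2,Y_2)$, conditions $KV$ on $(X_1,X_2)$ from each representation, and uses the resulting symmetry to conclude that $\esp{KV\given X_1,X_2}$ is $\common_X$-measurable before applying the tower property. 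You instead prove the structural identity $\common=\common_X\vee\common_Y$ (up to null sets) via von Neumann's alternating projections on the tensor-product decomposition of $L^2$, and then reduce~\eqref{eq:prob_key} to routine conditional-independence bookkeeping. Your identity is stronger than what the lemma needs and is of independent interest, and it does make the logic of~\eqref{eq:prob_key} transparent; the price is Hilbert-space machinery plus the identification $L^2(\sigma(X_1))\cap L^2(\sigma(X_2))=L^2(\common_X)$, which you rightly flag as delicate (intersections of $\sigma$-fields do not commute with passing to a.s.\ versions in general). Note, however, that the paper's own argument rests on exactly the same principle — it passes from ``$W'$ is a.s.\ equal to a $\sigma(X_1)$-measurable and to a $\sigma(X_2)$-measurable variable'' to ``$W'$ is $\common_X$-measurable'', both here and in Lemma~\ref{lem:conj0} — so the two proofs stand on the same measure-theoretic footing; if you keep your route, you should either prove that identification in the present setting or cite it explicitly, since it carries the full weight of part~\ref{it:KV} in your argument.
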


\begin{proof}
  By~\eqref{eq:kall} the  first independence  in Point~\ref{it:cond_ind}
  holds  if, for  any $\common$-measurable  nonnegative random  variable
  $W$,  $\esp{W\given\common_X} =  \esp{W\given  X_1,X_2}$.  Let $W$  be
  $\common$-measurable and nonnegative; so  $W = \phi(X_1,Y_1)$ for some
  function  $\phi$.  Let  $W'  =  \esp{W\given  X_1,X_2}$.  Since
  $\sigma(X_1,X_2)\independent_{\sigma(X_1)} \sigma(X_1,Y_1)$,
\[%  \begin{align*}
    W' = \esp{\phi(X_1,Y_1)\given X_1,X_2} %\notag\\
              = \esp{\phi(X_1,Y_1)\given X_1} = \esp{W\given X_1}.
\]%  \end{align*}
Therefore the  random variable $W'$  is $\sigma(X_1)$-measurable.
By  symmetry, it  is also  $\sigma(X_2)$-measurable,  so it  is in  fact
$\common_X$-measurable.  Therefore, by the tower property, we get:
\[
  \esp{W\given X_1,X_2} = W' = \esp{W'\given \common_X} =
  \esp{W\given \common_X}.
\]
   This proves the first point.

   Since $K$ is $\common$-measurable we  may write it as $K=k(X_1,Y_1)$;
   similarly          $V          =          v(Y_1,Y_2)$.          Since
   $\sigma(X_1,Y_1,Y_2)\independent_{\sigma(X_1)}  \sigma(X_1,X_2)$, and
   since $KV$ is $\sigma(X_1,Y_1,Y_2)$-measurable, we get:
   \[
     \esp{KV|X_1}  = \esp{KV|X_1,X_2}.
   \]
   Let  $W$  denote this  random
  variable.  The same argument applied with the conditional independence
  $\sigma(X_2,Y_1,Y_2)    \independent_{\sigma(X_2)}    \sigma(X_1,X_2)$
  yields                                                   symmetrically
  $\esp{KV\given X_2} =  \esp{KV\given X_1,X_2} = W$.  In particular $W$
  is  measurable  with respect  to  both  $X_1$  and  $X_2$, so  $W$  is
  $\common_X$-measurable.   Using  the  tower  property  of  conditional
  expectations      with      $\common_X\subset\sigma(X_1,X_2)$      and
  $\common_X\subset\common$,    and    the     fact    that    $K$    is
  $\common$-measurable, we get:
 \[
    W = \esp{W\given \common_X} = \esp{KV \given \common_X} = \esp{\esp{KV\given \common} \given \common_X} %\\
 = \esp{K\esp{V\given\common} \given \common_X}.
  \]  
This proves the first equality  of~\eqref{eq:prob_key} for $i=1$ and
then for $i=2$ by symmetry. Set  $V' = \esp{V\given \common}$ which is
$\common_Y$-measurable  and then $\sigma(Y_1, Y_2)$-measurable. Then
apply the first equality  of~\eqref{eq:prob_key} with $V$ replaced by
$V'$ to get the second equality  of~\eqref{eq:prob_key}. 
  The proof is then complete. 
\end{proof}

% .
%   Since the second term does not depend on $i$, the first equality  also holds
%   for $i=2$. 
%   Now we reapply this first equality  with $V$ replaced by
%   $V' = \esp{V\given \common}$: since $V' = \esp{V\given \common_Y}$
%   by the first point of the lemma, we get
%   \[
%     \esp{K \esp{V\given \common_Y} \given X_1} = \esp{K \esp{V\given\common} \given \common_X} = W.
%   \]

The fact that conjugacy behaves well 
on extended spaces is  now easy to establish.

\begin{proof}[Proof of Lemma~\ref{lem:extended}]
  Let $\phi(X_1,Y_1) = f_1(X_1)$. Since $\sigma(X_1) \independent_{\common_X}\common$
  by the first point of Lemma~\ref{lem:prob_key}, 
  we get by~\eqref{eq:kall} that 
  \( \esp{\phi(X_1,Y_1) \given \common} = \esp{f_1(X_1) \given\common}
    = \esp{f_1(X_1)\given \common_X}.\)
\end{proof}

The next lemma is the key to all our main results.  For a probability
space $(E, \ce,\mu)$, say that a kernel $\kk$ on $E$ is \emph{nice}
if $\kk\in L^1(E^2)$ and satisfies
$\int _E \kk(\cdot, y)\,\mu(\rd y)\in L^\infty (E)$. For a nice kernel $\kk$  we define the
bounded operator $T_\kk$ on $L^\infty (E)$ by
$T_\kk(g)=\int _E \kk(\cdot, y) g(y) \,\mu(\rd y)$.
\begin{lemma}[Operator defined by conjugated kernels]
  \label{lem:conj_kernels}
  Let  two spaces  $E_1$ and  $E_2$ be  coupled through  $\pi$. If  the
  nice kernel  $\kk$ on  $E_1$ satisfies $\kk  = \kk^{**}$  (for the
  extended coupling)  and if  $v\in L^\infty (E_1)$,  then $\kk^*$  is a
  nice kernel on $E_2$, $v^*\in L^\infty (E_2)$ and:
 \begin{equation}
   \label{eq:TT*}
   T_{\kk}(v) = T_{\kk}(v)^{**} = T_{\kk}(v^{**})
   \quad\text{and}\quad
       T_{\kk}(v)^* = T_{\kk^*}(v^*).
 \end{equation}
\end{lemma}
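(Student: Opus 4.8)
The plan is to pass to the probabilistic picture and reduce everything to Lemma~\ref{lem:prob_key}. Realize the coupling as $(X_1,X_2)\sim\pi$, so that the $\pi$-conjugate $f^*$ of $f\in L^1(E_1)$ is characterised by $f^*(X_2)=\esp{f(X_1)\given\common_X}$ with $\common_X=\sigma(X_1)\cap\sigma(X_2)$, and realize the extended coupling as $((X_1,Y_1),(X_2,Y_2))$ with $(X_1,X_2)\independent(Y_1,Y_2)$, both of law $\pi$, so that the $\extended$-conjugate uses $\common=\sigma(X_1,Y_1)\cap\sigma(X_2,Y_2)$. These $\common$ and $\common_X$ are exactly the ones in Lemma~\ref{lem:prob_key}, and $\common_X\subset\common$. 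By splitting $v=v_+-v_-$ and using linearity of $T_\kk$ and of conjugation we may assume $v\geq 0$; then $v^*\geq 0$, and since conditional expectation is an $L^\infty$-contraction we get $\norm{v^*}_\infty\leq\norm{v}_\infty$, so $v^*\in L^\infty(E_2)$, and likewise $v^{**}\in L^\infty(E_1)$. The hypothesis $\kk=\kk^{**}$ enters only through two consequences of Lemmas~\ref{lem:conj0} and~\ref{lem:conj1}\ref{it:ff**} applied to the extended coupling: first, $\kk(X_1,Y_1)$ is $\common$-measurable; second, $\kk(X_1,Y_1)=\kk^*(X_2,Y_2)$ $\P$-a.s.

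First I would check that $\kk^*$ is a nice kernel on $E_2$. Nonnegativity and $L^1(E_2^2)$-integrability are immediate from $\kk^*(X_2,Y_2)=\kk(X_1,Y_1)\geq 0$ a.s.\ and $\esp{\kk^*(X_2,Y_2)}=\esp{\kk(X_1,Y_1)}=\norm{\kk}_{L^1(E_1^2)}<\infty$ (using $(X_i,Y_i)\sim\mu_i\otimes\mu_i$). For the bound $\int_{E_2}\kk^*(\cdot,y)\,\mu_2(\rd y)\in L^\infty(E_2)$, evaluate this function at $X_2$: it equals $\esp{\kk^*(X_2,Y_2)\given X_2}=\esp{\kk(X_1,Y_1)\given X_2}$, and by Lemma~\ref{lem:prob_key}\ref{it:KV} with $K=\kk(X_1,Y_1)$ and $V=\un$ this equals $\esp{\kk(X_1,Y_1)\given X_1}=(T_\kk\un)(X_1)$, bounded by $\norm{T_\kk\un}_\infty$. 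Hence $T_{\kk^*}$ is a well-defined bounded operator on $L^\infty(E_2)$, and both $T_{\kk^*}(v^*)$ and $T_\kk(v)^*$ make sense.

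The key step is the identity
\[
  (T_\kk v)(X_1)=\esp{\kk(X_1,Y_1)\,v^*(Y_2)\given\common_X}=:G .
\]
To prove it, note $(T_\kk v)(X_1)=\esp{\kk(X_1,Y_1)v(Y_1)\given X_1}$ (as $Y_1\sim\mu_1$ is independent of $X_1$), apply Lemma~\ref{lem:prob_key}\ref{it:KV} with $K=\kk(X_1,Y_1)$ ($\common$-measurable) and $V=v(Y_1)$, and rewrite $\esp{v(Y_1)\given\common}=v^*(Y_2)$ via the second-coordinate version of Lemma~\ref{lem:extended}. Everything now follows. As $G$ is $\common_X$-measurable, so is $(T_\kk v)(X_1)$, hence $T_\kk(v)=T_\kk(v)^{**}$ by Lemma~\ref{lem:conj1}\ref{it:ff**} and $T_\kk(v)^*(X_2)=\esp{(T_\kk v)(X_1)\given\common_X}=G$. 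Feeding $v^{**}$ into the identity and using $(v^{**})^*=v^{***}=v^*$ (Lemma~\ref{lem:conj}, first item) gives $(T_\kk v^{**})(X_1)=\esp{\kk(X_1,Y_1)v^*(Y_2)\given\common_X}=G=(T_\kk v)(X_1)$, so $T_\kk(v)=T_\kk(v^{**})$, which with the previous point yields the first part of~\eqref{eq:TT*}. For the last equality, substitute $\kk^*(X_2,Y_2)=\kk(X_1,Y_1)$ and $v^*(Y_2)=v^{**}(Y_1)$ (the latter a.s., from conjugacy of $(v^{**},v^*)$, Lemma~\ref{lem:conj1}\ref{it:ff*} and Lemma~\ref{lem:conj0} with $(Y_1,Y_2)\sim\pi$) into $(T_{\kk^*}v^*)(X_2)=\esp{\kk^*(X_2,Y_2)v^*(Y_2)\given X_2}$ to obtain $\esp{\kk(X_1,Y_1)v^{**}(Y_1)\given X_2}$; then Lemma~\ref{lem:prob_key}\ref{it:KV} (with $V=v^{**}(Y_1)$) together with $\esp{v^{**}(Y_1)\given\common}=v^{***}(Y_2)=v^*(Y_2)$ gives $\esp{\kk(X_1,Y_1)v^*(Y_2)\given\common_X}=G=T_\kk(v)^*(X_2)$. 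Taking $X_2\sim\mu_2$ turns these a.s.\ identities into equalities in $L^\infty(E_2)$.

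I expect the main obstacle to be bookkeeping rather than any single hard estimate: Lemma~\ref{lem:prob_key} already encapsulates the delicate conditional-independence arguments, but one must carefully distinguish the nested $\sigma$-fields $\common_X\subset\common\subset\sigma(X_1,Y_1)$, keep track of whether the relevant conjugate is $v^*$ (a function of $Y_2$) or $v^{**}$ (a function of $Y_1$), and invoke the \emph{second}-coordinate variant of Lemma~\ref{lem:extended} instead of the one stated. The one genuinely model-specific point is the niceness of $\kk^*$, which is exactly where the boundedness built into ``nice'' — and the hypothesis $\kk=\kk^{**}$ — is used.
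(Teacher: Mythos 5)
Your proof is correct and follows essentially the same route as the paper: the probabilistic representation $T_\kk(v)(X_1)=\esp{\kk(X_1,Y_1)v(Y_1)\given X_1}$, Lemma~\ref{lem:prob_key}~\ref{it:KV} with $K=\kk(X_1,Y_1)$, and the second-coordinate version of Lemma~\ref{lem:extended} to replace $\esp{v(Y_1)\given\common}$ by $v^*(Y_2)$. The only (harmless) reorganizations are that you establish niceness of $\kk^*$ upfront with $V=\un$ and obtain $T_\kk(v)=T_\kk(v^{**})$ via $v^{***}=v^*$, whereas the paper deduces both by taking $v=\un$ in, and then iterating, the identity $T_\kk(v)^*=T_{\kk^*}(v^*)$.
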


\begin{proof}%[Proof of~Lemma~\ref{lem:conj_kernels}.]
  Let $(X_1,Y_1,X_2,Y_2)  \sim \extended$ denote the  extended coupling.
  Let $\kk$ be a nice kernel on $E_1$ such that $\kk = \kk^{**}$.  
As $\kk=\kk^{**}$, we deduce from Lemma~\ref{lem:conj1}~\ref{it:ff**}
that $(\kk, \kk^*)$ is conjugate and by Lemma~\ref{lem:conj0} that a.s.\
$\kk(X_1, Y_1)=\kk^*(X_2, Y_2)$ and that this random variable is  $\common$-measurable.

Let  $v\in  L^\infty  (E_1)$.   The  function  $T_{\kk}(v)$  admits  the
probabilistic representation:
\[
  T_{\kk}(v)(X_1) = \esp{ \kk(X_1,Y_1) v(Y_1) \given X_1}.
\]
We  apply Lemma~\ref{lem:prob_key}~\ref{it:KV}  with $K  = \kk(X_1,Y_1)$
and    $V    =    v(Y_1)$    to   get    that    $T_{\kk}(v)(X_1)$    is
$\common_X$-measurable, and  by Lemma~\ref{lem:conj1}~\ref{it:ff**} that
$  T_{\kk}(v)   =  T_{\kk}(v)^{**}$.  This  gives   the  first  equality
of~\eqref{eq:TT*}.

It is  obvious that if $v^*\in L^\infty
  (E_2)$. Using  the definition of the conjugate, and then
 $\esp{V\given\common} = v^*(Y_2)$ from 
Lemma~\ref{lem:extended} and  
Equation~\eqref{eq:prob_key}, we obtain:
%\begin{equation}
%  \label{eq:T12}
\[
  T_{\kk}(v)^*(X_2) 
 = \esp{\kk(X_1,Y_1)\, v(Y_1) \given \common_X}
%=  \esp{\kk^*(X_2,Y_2)\,  v^*(Y_2) \given \common_X}
=  \esp{\kk^*(X_2,Y_2)\,  v^*(Y_2) \given X_2}
  = T_{\kk^*}(v^*)(X_2).
\]%\end{equation}
Taking $v=\un$, we deduce that $T_{\kk^*}(\un)=T_\kk(\un)^*$ belongs
to $L^\infty (E_2)$, thus $\kk^*$ is a nice kernel on $E_2$ and
$T_{\kk^*}$ is a bounded operator on $L^\infty (E_2)$.  We have also
proven that $T_\kk(v)^*=T_{\kk^*}(v^*)$ which is the last equality
of~\eqref{eq:TT*}.  Using this equality again with $\kk$ and $v$ 
replaced by $\kk^*$ and $v^*$, we obtain that
$ T_\kk(v)^{**}=T_{\kk^*}(v^*)^*=T_{\kk^{**}}(v^{**})=T_\kk(v^{**})$,
which is the second equality of~\eqref{eq:TT*}.
\end{proof}

\subsection{Proof of the main result, Theorem~\ref{thr:main}}
\label{sub:proof_main}

\paragraph{The spectrum and effective reproduction number.}
We   prove    the   first    item   of    Theorem~\ref{thr:main}.    
Recall  the
    spectral radius of a bounded operator  is the maximal modulus of its
    complex   eigenvalues Set     $\kk_i=k_i/\gamma_i$ for  $i=1,2$.  Notice the bounded operators  $T_{\kk_i}$ on
$L^\infty  (E_i)$  and $\Tinf_{\kk_i}$  on  $\cl  (E_i)$ have  the  same
spectrum  and  thus   the  same  spectral  radius   and  more  generally
$R_e(\eta_i)=\rho(T_{\kk_i  \eta_i})$  for  $\eta_i\in  \Delta_i$.   For
simplicity,  write  $\kk=\kk_1$  and   thus,  as  $(\kk_1,\kk_2)$  is  a
conjugate pair, $\kk^*=\kk_2$ and $\kk^{**}=\kk$.

Let $\eta\in \Delta_1$ and $\lambda$ be a non-zero eigenvalue of
    $T_{\kk \eta}$ associated with an eigenvector
    $v\in L^1(E_1)$.
    By definition, we have:
    \[
      \lambda v = T_{\kk\eta}(v) = T_{\kk}(\eta
      v).
    \]
Thanks to  the first two equalities in~\eqref{eq:TT*} of
Lemma~\ref{lem:conj_kernels},  the function $\lambda v$ is equal to its
biconjugate (that is, the pair  $(v, v^*)$ is conjugate) and  $
  \lambda v = T_{\kk}( (\eta v)^{**})$.

%   By Lemma~\ref{lem:conj}~\ref{it:biconj}, we deduce that $(\eta v)^{**}=
% \eta^{**} v^{**}=\eta^{**} v$ and thus:
% \[
%   \lambda v = T_{\kk}( \eta^{**} v).
% \]

Assume the pair $(\eta, \eta_2)$ is  pre-conjugate. By
Lemma~\ref{lem:conj}~\ref{it:preconjfg}, the pair $(\eta v, \eta_2
v^*)$ is pre-conjugate, and thus $(\eta_2 v^*)^*=(\eta v)^{**}$. 
  Then, using Lemmas~\ref{lem:conj_kernels} and~\ref{lem:conj}~\ref{it:biconj}, we get:
    \begin{align*}
      T_{\kk^*}(\eta_2 v^*)
      = T_{\kk^*}((\eta_2 v^*)^{**}) 
 = T_{\kk}( (\eta_2 v^*)^*) ^*
      = T_\kk ( (\eta v)^{**})^*
        = \lambda v^*.
    \end{align*}  
    Since $v^{**}=v\neq \zero$, the function $v^*$ is non-zero and it is therefore
    an   eigenvector    of   $T_{\kk^*\eta_2}$,   associated    to   the
    eigenvalue~$\lambda$.  By symmetry we deduce that the spectrum up to
    $\{0\}$ of~$T_{\kk  \eta}$ and~$T_{\kk^*\eta_2}$ coincide,  and thus
    their spectral radius are equal.  This proves Point~\ref{it:th-Re}.

  % \[*********\]

  % Let $\eta\in \Delta_1$ and $\lambda$ be a non-zero eigenvalue of
  %   $T_{\kk \eta}$ associated with an eigenvector
  %   $v\in L^1(E_1)$.
  %   By definition, we have
  %   \[
  %     \lambda v = T_{\kk_1 \eta_1}(v) = T_{\kk_1}(\eta_1
  %     v).
  %   \]
  %   Since $(\kk_1,\kk_2)$ is a conjugate pair by hypothesis,
  %   we may apply Lemma~\ref{lem:conj_kernels}: $\lambda v_1$ is equal to its biconjugate,
  %   and
  %   \[ \lambda v_1 = \Tinf_{\kk_1}( (\eta_1 v_1)^{**}).\]
  %   Since $v_1^{**} = v_1$, by Lemma~\ref{lem:conj}~\ref{it:biconj} we have $(\eta_1 v_1)^{**} = \eta_1^{**} v_1$.  
  %   Then, using Lemma~\ref{lem:conj_kernels}, 
  %   \begin{align*}
  %     \Tinf_{\kk_2}(\eta_2 v_1^*) = \Tinf_{\kk_2}((\eta_2 v_1^*)^{**}) 
  %                                 = \Tinf_{\kk_2}( \eta_2^{**} v_1^*) 
  %                                 = (\Tinf_{\kk_1}( \eta_2^* v_1))^* 
  %     = (\lambda v_1)^* = \lambda v_1^*.
  %   \end{align*}
  %   The function $v_1^*$ is therefore an eigenvector of $\Tinf_{\kk_2\eta_2}$, associated to
  %   the eigenvalue~$\lambda$. 
  % By symmetry we  deduce that the spectrum up
  % to  $\{0\}$  of~$\Tinf_{\kk_1\eta_1}$ and~$T_{\kk_2\eta_2}$  coincide.
  % Since the spectral radius of a bounded operator is the maximal modulus
  % of  its  complex  eigenvalues,  we  deduce  that  the  spectral  radii
  % of~$\Tinf_{\kk_1\eta_1}$ and~$T_{\kk_2\eta_2}$ coincide.

  \paragraph{The equilibria.}
  Let us now prove the first part of Point~\ref{it:th-I} on the
  equilibria are conjugate.  
  Let $g\in \cl(E_1)$ be an equilibrium of the model $\param_1$. 
    Since~$F_{\eta}(g)=0$, we have:
 \[ %   \begin{equation}      \label{eq:continuity-I0-g1}
    g = \frac{\Tinf_{k_1}(\eta g)}{\gamma_1 + \Tinf_{k_1}(\eta g)}\cdot
\] %  \end{equation}
By   Lemma~\ref{lem:conj_kernels},  seeing   $g$   as   an  element   of
$L^\infty  (E_1)$,  we  get  that  $T_{k_1}(\eta g)$  is  equal  to  its
biconjugate.  Since $\mu_1$-a.e. $\gamma_1^{**}  = \gamma_1$ , we easily
deduce using Lemma~\ref{lem:conj}~\ref{it:biconj} that:
  \[
    g^* = \frac{T_{k_1}(\eta g)^*}{\gamma_1^* + T_{k_1}(\eta
      g)^*}
    \quad\text{and then}\quad
       g^{**} = \frac{T_{k_1}(\eta g)}{\gamma_1 + T_{k_1}(\eta
      g)},
  \]
  that  is, $\mu_1$-a.e.\  $g^{**} =  g$. So  $(g, g^*)$  is
  conjugate.    By  Lemma~\ref{lem:conj}~\ref{it:preconjfg},   the  pair
  $(\eta    g,    \eta_2    g^*)$    is    pre-conjugate,    and    thus
  $(\eta g)^* = (\eta_2 g^*)^{**}$. 
We get, using  Lemma~\ref{lem:conj_kernels} for the first and last equalities:
  \[
    T_{k_1} (\eta g)^* = T_{k_2}((\eta g)^*) =T_{k_2}((\eta_2 g^*)^{**}
    ) = T_{k_2}(\eta_2 g^*).
  \]
Notice that if  $\mu_2$-a.s.\ $f=h$
  then $\Tinf_{k_2}(f)=\Tinf_{k_2}(g)$, so that $\Tinf_{k_2}(\eta_2 g^*)$
  is a well defined element of $\cl(E_2)$.
  Thus  defining $g_2\in \cl(E_2)$ by:
   \[
    g_2 = \frac{\Tinf_{k_2}(\eta_2 g^*)}{\gamma_2 + \Tinf_{k_2}(\eta_2 g^*)},
   \]
   we get that $\mu_2$-a.e.\ $g_2=g^*$ and that $F_{\eta_2}(g_2)=0$.  In
   other   words,~$g_2$  is   an   equilibrium  for   the  model   given
   by~$\param_2$  when using  the  vaccination  strategy $\eta_2$,  and,
   seeing $g_i$ as  an element of $L^1 (E_i)$, the  pair $(g_1, g_2)$ is
   conjugate. This proves the first part of  Point~\ref{it:th-I}.

   \paragraph{The fraction of infected individuals $\I$.}
   We  now  prove  that  $\I[\param_1](\eta_1)  =  \I[\param_2](\eta_2)$
   whenever  the  pair  $(\eta_1,\eta_2)$ is  preconjugate.   We  assume
   without loss of  generality that $R_0[\param_1]=R_e[\param_1](\un)>1$
   which is equivalent  to~$R_0[\param_2]=R_e[\param_2](\un) >1$, thanks
   to Theorem~\ref{thr:main}~\ref{it:th-Re} as the  pair $(\un, \un)$ is
   conjugate and thus  pre-conjugate.  Let~$g_1 = \mathfrak{g}_{\eta_1}$
   be the  maximal equilibrium for  the model~$\param_1$ when  using the
   vaccination strategy $\eta_1$. By the previous result there exists an
   equilibrium $g_2$  for SIS  model $\param_2$ such  that $\mu_2$-a.s.\
   $g_2 =  g_1^*$. Let us  now prove that it  is the maximal  one. Since
   $(1-g_2)     =    (1-     g_1)^*$    in     $L^1(E_2)$,    we     get
   $R_e[\param_1](1-g_1)     =    R_e[\param_2](1-g_2)$,     again    by
   Theorem~\ref{thr:main}~\ref{it:th-Re}.        Since~$R_0[\param_1]>1$
   and~$g_1$  is  the  maximal  equilibrium  for~$\param_1$,  we  deduce
   from~\cite[Proposition~5.5]{ddz-hit}  that  the vaccination  strategy
   associated   to~$g_1$  is   critical,   that  is,   $R_e[\param_1](1-
   g_1)=1$.  Since $g_2$  is  an  equilibrium for~$\param_2$  satisfying
   $R_e[\param_2](1-        g_2)=1$,        we       deduce        using
   again~\cite[Proposition~5.5]{ddz-hit} that~$g_2$ is  also the maximal
   equilibrium   for~$\param_2$.   Using   Point~\ref{it:preconjfg}  of
   Lemma~\ref{lem:conj},      we      deduce     that      the      pair
   $(g_1   \eta_1,  g_2   \eta_2)$  is   pre-conjugate  and   then  from
   Point~\ref{it:integral-pre}                therein               that
   $\I_1(\eta_1)  =  \int_{E_1}  \eta_1  g_1 \,  \rd\mu_1  =  \int_{E_2}
   \eta_2g_2\,  \rd\mu_2  =  \I_2(\eta_2)$.   This  ends  the  proof  of
   Point~\ref{it:th-I}.

   \paragraph{Proof of  Point~\ref{it:main}.}
   Thanks to Points~\ref{it:th-Re} and~\ref{it:th-I},  it is enough to
   check that  $C[\param_1](\eta_1) = C[\param_2](\eta_2)$  whenever the
   pair   $(\eta_1,\eta_2)$   is    pre-conjugate.    Since   the   pair
   $(\costad_1,\costad_2)$ is conjugate, this is a direct consequence of
   Points~\ref{it:integral-pre}        and~\ref{it:preconjfg}       from
   Lemma~\ref{lem:conj}.

\printbibliography

\end{document}